\DeclareMathOperator*{\argmax}{arg\,max}
\DeclareMathOperator*{\argmin}{arg\,min}
\newtheorem{lem}{Lemma}[section]
\newtheorem{cor}{Corollary}[section]
\newtheorem{rem}{Remark}[section]
\newtheorem{prop}{Proposition}[section]
\crefname{ALC@unique}{Step}{Steps} 
\crefname{equation}{}{}
\numberwithin{equation}{section}
\newcommand{\ad}{A_{\mathcal{G}}}
\newcommand{\G}{\mathcal{G}}
\newcommand{\bG}{\bar{\mathcal{G}}}
\newcommand{\bE}{\bar{E}}
\newcommand{\n}[1]{\|#1\|}
\newcommand{\p}{\mathcal{P}_s}
\newcommand{\R}{\mathbb{R}}
\newcommand{\N}{\mathbb{N}}
\newcommand{\supp}{\tx{supp}}
\newcommand{\M}{\mathcal{M}}
\newcommand{\Sc}[2]{ #1^{\intercal} #2}
\newcommand{\cC}{c}
\newcommand{\hg}{h_{\G}}
\newcommand{\bc}{\bar{C}}
\newcommand{\bs}{\bar{S}}
\newcommand{\tx}{\text}
\newcommand{\sm}{\setminus}
\newcommand{\F}{\mathcal{F}}
\newcommand{\Q}{\mathcal{Q}}
\newcommand{\FW}{\mathcal{FW}}
\newcommand{\FD}{\mathcal{FD}}
\newcommand{\fun}{f}
\title{Fast cluster detection in networks by first-order optimization}
\author{ Immanuel~M.~Bomze\thanks{ISOR, VCOR \& ds:UniVie,
		Universit\"{a}t Wien, Austria
		(\tt{immanuel.bomze@univie.ac.at})}
	\and
	Francesco~Rinaldi\thanks{Dipartimento di Matematica ``Tullio Levi-Civita'', Universit\`a
		di Padova, Italy
		(\tt{rinaldi@math.unipd.it})}
	\and 
	Damiano~Zeffiro\thanks{Dipartimento di Matematica ``Tullio Levi-Civita'', Universit\`a
		di Padova, Italy (\tt{damiano.zeffiro@math.unipd.it})}
}
\begin{document}
	
	\maketitle
	
	\begin{abstract} 
		Cluster detection plays a fundamental role in the analysis of data. In this paper, we focus on the use of $s$-defective clique models for network-based cluster detection and propose a nonlinear optimization approach that efficiently handles those models in practice. In particular, we introduce an equivalent continuous formulation for the problem  under analysis, and we analyze some tailored variants of the Frank-Wolfe algorithm that enable us to quickly find maximal $s$-defective cliques. The good practical behavior of those algorithmic tools, which is closely connected to their support identification properties, makes them very appealing in practical applications.  The reported numerical results clearly  show the effectiveness of the proposed approach. \\	
	\textbf{Keywords:} Clique relaxations, maximum s-defective clique problem, support identification, Frank-Wolfe method. \\	
	\textbf{AMS subject classifications} 05C35, 05C50, 65K05, 90C06, 90C30, 90C35. 
	
		\end{abstract}

\section{Introduction}
In the context of network analysis the clique model, dating back at least to the work of Luce and Perry \cite{luce1949method} about social networks, refers to subsets with every two elements in a direct relationship. The problem of finding maximal cliques has numerous applications in domains including telecommunication networks, biochemistry, financial networks, and scheduling (\cite{bomze1999maximum}, \cite{wu2015review}). From an optimization perspective, this problem has been the subject of extensive studies stimulating new research directions in both continuous and discrete optimization (see, e.g., \cite{bomze1998standard}, \cite{bomze1999maximum}, \cite{bomze2000copositive}, \cite{stozhkovcontinuous}). The Motzkin-Straus quadratic formulation \cite{motzkin1965maxima} in particular has motivated several algorithmic approaches (see \cite{bomze1997evolution}, \cite{hungerford2019general} and references therein) to the maximum clique problem, beside being of independent interest for its connection with Tur\'{a}n's theorem \cite{aigner2010proofs}. \\
Since the strict requirement that every two elements have a direct connection is often not satisfied in practice, many relaxations of the clique model have been proposed (see, e.g., \cite{pattillo2013clique} for a survey). In this work we are interested in $s$-defective cliques (\cite{chen2021computing}, \cite{trukhanov2013algorithms}, \cite{yu2006predicting}), allowing up to $s$ links to be missing, and introduced in \cite{yu2006predicting} for the analysis of protein interaction networks obtained with large scale techniques subject to experimental errors. \\ 
In this paper, we first define a regularized version of a cubic continuous formulation for the maximum \mbox{$s$-defective} clique problem  proposed in \cite{stozhkovcontinuous}. We then apply variants of the classic Frank-Wolfe (FW) method  \cite{frank1956algorithm} to this formulation. \\ 
FW variants are a class of first order optimization methods widely used in the optimization and machine learning communities (see, e.g., \cite{clarkson2010coresets}, \cite{jaggi2013revisiting}, \cite{lacoste2015global}, \cite{kerdreux2020affine} and references therein) thanks to their sparse approximation properties, their weaker requirement of a linear minimization oracle instead of a projection oracle with respect to proximal gradient like methods, and their ability to quickly identify the support of a solution. These identification properties, first proved qualitatively for the Frank Wolfe method with in face directions (FDFW, \cite{bashiri2017decomposition}, \cite{freund2017extended}, \cite{guelat1986some}) in the strongly convex setting \cite{guelat1986some}, were recently revisited for the away-step Frank-Wolfe (AFW)  with quantitative bounds (\cite{bomze2020active}, \cite{garber2020revisiting}) and extended to non convex objectives (\cite{bomze2019first}, \cite{bomze2020active}). \\
As we will see in the paper, the support identification properties of FW variants are especially suited for our maximal $s$-defective clique formulation, since in this case the optimization process can stop as soon as the support of a solution is identified. \\
Our contributions can be summarized as follows:
\begin{itemize}
	\item We solve the spurious solution problem for the maximum $s$-defective clique formulation proposed in \cite{stozhkovcontinuous} by introducing a regularized version, for which we prove equivalence between local maximizers and maximal $s$-defective cliques. In particular, no postprocessing algorithms are needed to derive the desired structure from a local solution. Our work develops along the lines of analogous results proved for regularized versions of the Motzkin - Straus quadratic formulation (\cite{bomze1999maximum}, \cite{hungerford2019general}).
	\item We prove that the FDFW applied to our formulation identifies the support of a maximal $s$-defective clique in a finite number of iterations.
	\item We propose a tailored Frank-Wolfe variant for the $s$-defective clique  formulation at hand exploiting its product domain structure. This method retains the identification properties of the FDFW while significantly outperforming it in numerical tests.  
\end{itemize}
The paper is organized as follows: after 
giving some basic notation and preliminaries
in Section \ref{Sec:Not}, we study the regularized maximum $s$-defective clique formulation in Section \ref{Sec:Reg}. We then analyze, in Section \ref{Sec:FDFW}, the FDFW algorithm and prove that it identifies a maximal $s$-defective clique in finite time. In Section \ref{Sec:FWvar}, we describe our tailored FW variant and prove that it shares similar identification properties as FDFW. Finally, in Section~\ref{Sec:NumRes}, we report some numerical results showing the practical effectiveness of the proposed approach.  In order to improve readability,
some technical details and numerical result tables are deferred to a small appendix.
\section{Notation and preliminaries}\label{Sec:Not}
For any integers $a,b$ we denote by $[a\! :  \! b]$ the set of all integers $k$ satisfying $a\le k \le b$. 

For a vector $r \in \R^d$, the $d$-dimensional Euclidean space, and a set $A \subset [1\!:\! d]$,  we denote with $r_A$ the components of $r$ with indexes in $A$; $e$ is always a vector with all components equal to 1, and dimension clear from the context. Similarly, we denote by $e_i$ the $i$-th column of an appropriately sized identity matrix. 

Let $\G = (V, E)$ be a graph with  vertices $V$ and  and edges $E$, $n = |V|$, 
$\ad$ the adjacency matrix of $\G$, and let  $\bG = (V, \bE)$ the complementary graph.  
The notation we use largely overlaps with the one introduced in \cite{stozhkovcontinuous}. For $s \in \mathbb{N}$ with $s\le |\bE|$ we define 
$$D_s(\G) = \{  y \in  \{0, 1\}^{\bE} \ | \  e^{\intercal}y 
\leq s \}\, ,$$
representing the set of "fake edges" to be added to the graph in order to complete an $s$-defective clique, and its continuous relaxation as
$$D'_s(\G) =  \{  y \in  [0, 1]^{\bE} \ | \  e^{\intercal}y 
\leq s \} \, .$$ For $y \in D'_s(\G)$ we define the induced adjacency matrix $A(y)\in \R^{n \times n}$ as	
$$
A(y)_{ij} = 
\begin{cases}
	y_{ij} \ &\text{if} \ \{i, j\} \in \bE \, ,	\\
	0 \ &\text{if} \	\{i, j\} \notin E \, .
\end{cases}
$$
For $y \in D_s(\G)$ in particular we define $\G(y)$ as the graph with adjacency matrix $A_\G + A(y)$, that is the  graph where we add to $\G$ the edge $\{i, j\}$ whenever $y_{ij} = 1$. We also define $E(i)$ and $E^y(i)$ as the neighbors of $i$ in $\G$ and $\G(y)$ respectively. \\  
Let $\p=\Delta_{n-1} \times D'_s(\G) $, with $\Delta_{n-1}$ the 
$(n-1)$-dimensional simplex.
The objective of the $s$-defective clique relaxation defined in~\cite{stozhkovcontinuous} is 
\begin{equation}
	f_{\G}(z)=	f_{\G}(x, y) := x^{\intercal}[\ad + A(y)]x \,  ,\quad z=(x,y)\in \p
\end{equation}
so that when $A(y) = 0$ one retrieves Motzkin-Straus quadratic formulation. \\
\section{A regularized maximum $s$-defective clique formulation}\label{Sec:Reg}
Here we consider the problem     
\begin{equation} \label{deq:P}
	\max\{ h_{\G}(z)\ | \ 	z\in \p \}\, , \tag{P} 	
\end{equation}
where $h_{\G}: \p \rightarrow \R_{> 0}$ is a regularized version of $f_{\G}$:
$$h_{\G}(z)= h_{\G}(x, y) := f_{\G}(x, y) + \frac{\alpha}{2} \n{x}^2 + \frac{\beta}{2} \n{y}^2 $$
for some $\alpha \in (0, 2)$ and $\beta > 0$. In particular, when $y = 0$ the objective $h_{\G}$ corresponds to  the quadratic regularized maximal clique formulation introduced in \cite{bomze1997evolution}. \\
For non-empty $C \subseteq V$ let $x^{(C)} = \frac 1{|C|}\, \sum_{i\in C}e_i$
be the characteristic vector in $\Delta_{n  - 1}$ of the clique $C$, 	and $$\Delta^{(C)}= \{ x\in \Delta_{n-1} \ | \ x_i=0\mbox{ for all }i\in V\setminus C\}$$ 
be the minimal face of $\Delta_{n  - 1}$ containing  $x^{(C)}$ in its relative interior. \\
For $p \in \p$ we define as $T_{\p}(p)= \{ v-p:v\in \p\}$ as the cone of feasible directions at $p$ in $\p$, while for $r \in \mathbb{R}^{|V| + |\bE|}$ we define  $T_{\p}^0(p, r)$ as the intersection between $T_{\p}(p)$ and the plane orthogonal to $r$:
\begin{equation*}
	T_{\p}^0(p, r) = \{d \in T_{\p}(p) \ | \ \Sc{d}{r} = 0\} \, .
\end{equation*}
We now prove that there is a one to one correspondence between (strict) local maxima of $h_{\G}$ and $s$-defective cliques coupled together with $s$ fake edges including the one missing on the clique.\\ Recall that in our 
polytope-constrained setting, (second order) sufficient conditions for the local maximality of $p \in \p$ are (see, e.g., \cite{bertsekas1997nonlinear}) 
\begin{equation}
	\Sc{\nabla \hg(p)}{d} \leq 0 \tx{ for all } d \in T_{\p}(p)
\end{equation}
and 
\begin{equation}
	d^{\intercal} D^2\hg(p) d  < 0 \tx{ for all } d \in T_{\p}^0(p, \nabla \hg(p)) \, .
\end{equation}
In the rest of the article we use $\M_s(\G)$ to denote the set of strict local maximizers of $h_{\G}$.
\begin{prop}[characterization of local maxima for $h_{\G}$] \label{p:lm}
	The following are equivalent: 
	\begin{itemize}
		\item[(i)] $p \in \p$ is a local maximizer for $h_{\G}(x,y)$;
		\item[(ii)] $p \in \M_s(\G)$;
		\item[(iii)] $p = (x^{(C)}, y^{(p)})$ where   
		$s=e^{\intercal}y^{(p)} \in \N$, with $C$ an $s$-defective clique in $\G$ which is also a maximal clique in $\G(y^{(p)})$, and $y^{(p)} \in D_s(\G)$ such that  $y^{(p)}_{ij} = 1$ for every $\{i, j\} \in {C \choose 2} \cap\bE$.		
	\end{itemize}
	In either of these equivalent cases, we have
	\begin{equation}\label{loval}h_{\G}(p) = 1 - \frac{2 - \alpha}{2|C|} + s\,  \frac{\beta}{2} \, .
	\end{equation}
\end{prop}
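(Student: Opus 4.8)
The plan is to exploit the product structure $\p = \Delta_{n-1}\times \ds$ and to establish the cycle (i)$\Rightarrow$(iii)$\Rightarrow$(ii)$\Rightarrow$(i); since a strict local maximizer is in particular a local maximizer, the implication (ii)$\Rightarrow$(i) is immediate and all the work sits in the other two. Two structural observations drive everything. First, for fixed $x$ the map $y\mapsto \hg(x,y)$ is \emph{strictly convex} on $\ds$, because its only nonlinear $y$-dependence is the term $\tfrac\beta2\n{y}^2$ with $\beta>0$ (the coupling $\Sc{x}{A(y)x}=2\sum_{\{i,j\}\in\bE} y_{ij}x_ix_j$ is linear in $y$). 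Second, for a fixed \emph{integral} $y\in D_s(\G)$ the map $x\mapsto \hg(x,y)$ equals $\Sc{x}{[\ad+A(y)]x}+\tfrac\alpha2\n{x}^2$ plus the constant $\tfrac\beta2\n{y}^2$, i.e. it is exactly the regularized Motzkin--Straus objective of the simple graph $\G(y)$; I will invoke the known characterization of its (strict) local maximizers as characteristic vectors of maximal cliques of $\G(y)$, valid for $\alpha\in(0,2)$.

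For (i)$\Rightarrow$(iii) I would first fix the $x$-component of a local maximizer $p=(x^*,y^*)$ and use strict convexity in $y$: a strictly convex function cannot have a local maximizer that is not a vertex of the polytope, and since $s\in\N$ the vertices of $\ds$ are precisely the $0/1$ vectors with $e^\ti y\le s$, so $y^*\in D_s(\G)$. Next I would show the budget is saturated: if $e^\ti y^*<s$ there is a coordinate $\{k,l\}\in\bE$ with $y^*_{kl}=0$ that can be increased, and along the feasible direction $d=(0,e_{kl})$ one has $\Sc{\nabla\hg(p)}{d}=2x^*_kx^*_l\ge0$ with $d^\ti D^2\hg(p)\,d=\beta>0$, so $\hg$ strictly increases, contradicting local maximality; hence $e^\ti y^*=s$. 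Finally, fixing $y=y^*$ and applying the regularized Motzkin--Straus characterization to $\G(y^*)$ gives $x^*=x^{(C)}$ for a maximal clique $C$ of $\G(y^*)$; every pair in ${C\choose2}\cap\bE$ is then an edge of $\G(y^*)$, i.e. $y^*_{ij}=1$, and since at most $s$ coordinates of $y^*$ equal $1$, $C$ has at most $s$ missing edges in $\G$ and is an $s$-defective clique.

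For (iii)$\Rightarrow$(ii) I would verify the stated second-order sufficient conditions at $p=(x^{(C)},y^{(p)})$. Splitting a feasible direction as $d=(d_x,d_y)$, the gradient inner product separates as $\Sc{\nabla_x\hg(p)}{d_x}+\Sc{\nabla_y\hg(p)}{d_y}$. The $x$-term is $\le0$ by the first-order optimality of $x^{(C)}$ for the regularized Motzkin--Straus problem on $\G(y^{(p)})$. For the $y$-term I would note that on coordinates with $y^{(p)}_{ij}=1$ the feasible $d_y$ can only decrease while the partial derivative $2x_ix_j+\beta$ is strictly positive, whereas on coordinates with $y^{(p)}_{ij}=0$ feasibility forces $i\notin C$ or $j\notin C$ (otherwise $\{i,j\}\in{C\choose2}\cap\bE$ would force $y^{(p)}_{ij}=1$), so $x^{(C)}_ix^{(C)}_j=0$ and that coordinate contributes nothing; together with the budget constraint this yields $\Sc{\nabla_y\hg(p)}{d_y}<0$ whenever $d_y\ne0$. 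Consequently every $d\in T_{\p}^0(p,\nabla\hg(p))$ has $d_y=0$ and $\Sc{\nabla_x\hg(p)}{d_x}=0$, so the test $d^\ti D^2\hg(p)\,d<0$ collapses to $d_x^\ti[2(\ad+A(y^{(p)}))+\alpha I]d_x<0$, which is precisely the negative-definiteness of the reduced Hessian of the regularized Motzkin--Straus objective at the characteristic vector of the maximal clique $C$. Note that this step is also where the positive $\beta I$ block in the $y$-directions is prevented from breaking the maximum, since those directions never enter $T_{\p}^0$.

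The value \eqref{loval} then follows by direct substitution: $\Sc{x^{(C)}}{[\ad+A(y^{(p)})]x^{(C)}}=1-\tfrac1{|C|}$ because $C$ is a clique in $\G(y^{(p)})$, $\tfrac\alpha2\n{x^{(C)}}^2=\tfrac\alpha{2|C|}$, and $\tfrac\beta2\n{y^{(p)}}^2=\tfrac\beta2 s$ since $y^{(p)}$ is $0/1$ with $s$ ones. The main obstacle I anticipate is not a single computation but securing the regularized Motzkin--Straus characterization in the precise form needed: it must hold for the whole range $\alpha\in(0,2)$ (the reference \cite{bomze1997evolution} is the $\alpha=1$ case), and it is applied to the \emph{variable} graph $\G(y^{(p)})$ whose edge set contains the fake edges. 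In particular one must treat ``maximal clique of $\G(y^{(p)})$'' carefully when some of the $s$ active fake edges lie outside ${C\choose2}$, which is exactly the freedom that keeps the correspondence in (iii) one-to-one. Establishing (or carefully citing) this characterization for general $\alpha$, together with the negative-definiteness of its reduced Hessian, is the crux on which both nontrivial implications rest.
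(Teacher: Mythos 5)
Your proposal is correct and its skeleton is the same as the paper's: (ii)$\Rightarrow$(i) immediate, (i)$\Rightarrow$(iii) via strict convexity in $y$ plus the known characterization of local maximizers of the regularized Motzkin--Straus formulation applied to the augmented graph $\G(y^{(p)})$, and (iii)$\Rightarrow$(ii) via the second-order sufficient conditions. Within that skeleton you make two genuinely different choices. For the $y$-block of (i)$\Rightarrow$(iii), you obtain $y^{(p)}\in D_s(\G)$ in one stroke from ``a strictly convex function has no local maximizer outside the vertex set'' together with integrality of the vertices of $D'_s(\G)$, and then saturate the budget by a separate perturbation; the paper instead runs two explicit directional arguments (increasing one fractional coordinate, and the swap direction $(0,e_{ij}-e_{lm})$ with curvature $2\beta>0$). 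Your packaging is cleaner, and it makes the saturation $e^{\intercal}y^{(p)}=s$ explicit, a point the paper's wording (``if $s:=e^{\intercal}y^{(p)}$ were fractional'') leaves somewhat implicit for integer but deficient values. For (iii)$\Rightarrow$(ii), you outsource the entire $x$-block --- stationarity, identification of the critical cone, and negative definiteness of the reduced Hessian --- to the Motzkin--Straus literature, whereas the paper verifies it self-containedly: the H\"older-type estimates \cref{peq:gv} and \cref{peq:1c} for first order, then $H_C=2ee^{\intercal}+(\alpha-2)\mathbb{I}$ with $e^{\intercal}d_C=0$ giving $d^{\intercal}Hd=(\alpha-2)\n{d_C}^2<0$ in \cref{peq:2c}. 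What you buy is modularity; what it costs is that your argument is conditional on the cited results supplying genuinely second-order information (critical cone plus definiteness, not merely strict local maximality) for the whole range $\alpha\in(0,2)$ --- exactly the crux you flag. The paper's two-line computation discharges that dependency, which is presumably why it is done by hand; note also that your concern about \cite{bomze1997evolution} covering only $\alpha=1$ afflicts the paper's own (i)$\Rightarrow$(iii) equally and is resolved there just as you suggest, by additionally citing \cite{hungerford2019general}.
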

\begin{proof}
	Let $p = (x^{(p)}, y^{(p)}) \in \p$, $g = \nabla \hg(p), H = D^2\hg(p)$. \\
	(ii) $\Rightarrow$ (i) is trivial. \\	
	(i) $\Rightarrow$ (iii). If $s:= e^{\intercal}
	{y^{(p)}}$ were fractional, then for some $\{i, j\} \in \bar{E}$ we would have $y^{(p)}_{ij} < 1$. Furthermore 
	\begin{equation} \label{eq:dph}
		\frac{\partial\hg (p)}{\partial y_{ij}}  = 2x^{(p)}_i x^{(p)}_j + \beta y^{(p)}_{ij} \geq 0, \quad 	\frac{\partial\hg (p)}{\partial^2y_{ij}}   = \beta > 0 \, .
	\end{equation}
	Thus for $\varepsilon >0$ small enough we have $\hg (p + \varepsilon e_{ij}) > \hg(p)$ with $p + \varepsilon e_{ij} \in \p$, which means that $p$ is not a local maximizer. Hence $s\in \N$ and obviously $s\le |\bE|$ as well as $y^{(p)}\in D'_s(\G) $.\\
	Assume now  by contradiction that $p$ is a local maximizer but $y^{(p)} \notin D_s(\G)$. Then for two distinct edges $\{i, j\}$, $\{l, m\} \in \bar{E}$ we must have $y^{(p)}_{ij}, y^{(p)}_{lm} \in (0, 1)$. Let $d = (0, e_{ij} - e_{lm})$. Since $\pm d$ are both feasible directions and $p$ is a local maximizer, necessarily $\Sc{g}{d} = 0$. But we also have
	\begin{equation}
		d^{\intercal} H d  = \frac{\partial \hg(p)}{\partial^2 y_{ij}} + \frac{\partial \hg(p)}{\partial^2 y_{lm}} - 2\, \frac{\partial \hg(p)}{\partial y_{ij} \partial y_{lm}}  = 2\beta > 0 \, .
	\end{equation}  
	so that again for $\varepsilon>0$ small enough $\hg (p + \varepsilon d) > \hg(p)$ with $p + \varepsilon d \in \p$, a contradiction. \\
	We proved that if $p$ is a local maximizer, then $s=e^{\intercal} {y^{(p)}} \in \N$ and  $y^{(p)} \in D_s(\G)$. But $x^{(p)}$ must be a local maximizer for the function $x \mapsto h_{\G}(x, y^{(p)})$, which is (up to a constant) a regularized maximal clique relaxation for the augmented graph $\G (y^{(p)})$. Thus by well known results (see, e.g., \cite{hungerford2019general}, \cite{bomze1997evolution}) we must have $x = x^{(C)}$ with $C$ a maximal clique in $\G(y^{(p)})$. In particular, since $\G(y^{(p)})$ is defined by adding $s$ edges to $\G$, $C$ must be an $s$-defective clique in $\G$. \\
	(iii) $\Rightarrow$ (ii). For a fixed $p = (x^{(C)}, y^{(p)})$ with $C, y^{(p)}$ satisfying the conditions of point (iii) let $\bar{C} = V\sm C$, $S = \tx{supp}(y^{(p)})$ and $\bar{S} = \bar{E} \sm S$. We abbreviate $E^{(p)}(i)=E^y(i)$ with $y=y^{(p)}$. For every $i \in V$ we have 
	\begin{equation}
		g_i = \alpha x^{(C)}_i + \sum_{j \in E^{(p)}(i)} 2x^{(C)}_j
	\end{equation}
	In particular for $i \in C$ 
	\begin{equation}\label{peq:gic}
		g_i = \frac{\alpha}{|C|} + \sum_{j \in C \sm \{ i \}} 2x^{(C)}_j = \frac{1}{|C|}(\alpha + 2|C| - 2)
	\end{equation}
	and for every $i \in \bc$
	\begin{equation} \label{peq:gbic}
		g_i =  \sum_{j \in E^{(p)}(i)\cap C} 2x^{(C)}_j \le \frac{2|C|-2}{|C|}
	\end{equation}
	where we used $x^{(C)}_j = 1/|C|$ for every $j \in C$, $x^{(C)}_j = 0$ otherwise. \\
	For $\{i, j\} \in \bE$ we have 
	\begin{equation}
		g_{ij} = \beta y^{(p)}_{ij} + 2x^{(C)}_i x^{(C)}_j
	\end{equation}
	and in particular $g_{ij} = 0$ for $\{i, j\} \in \bar{S}$, while for $\{i, j\} \in S$
	\begin{equation}
		g_{ij} = \beta + 2 x^{(C)}_i x^{(C)}_j \geq \beta > 0 \, ,
	\end{equation}
	where we used $y^{(p)}_{ij} = 1$ for $\{i, j\} \in S$, $0$ otherwise, and 
	$x^{(C)}_i x^{(C)}_j = 0$ for $\{i, j\} \in \bar{S}\subseteq \bE$. \\
	Let $d$ be a feasible direction from $p$, so that $d = v - p$ with $v \in \p$. Let $\sigma_S = \sum_{\{i, j\} \in S} g_{ij}$, $\sigma_C = \sum_{i \in C} v_i $ \\ $= 1 - \sum_{i \in \bc} v_i \in [0, 1]$, $m_{\bar{C}} = \max_{i \in \bar{C}}g_i$, so that by \cref{peq:gbic} we have $m_{\bar{C}} \le \frac{2|C|-2}{|C|}$.
	Then
	\begin{equation} \label{peq:gp}
		\Sc{g}{p} = \sum_{i \in \bar{C}} x^{(C)}_i g_i + \sum_{i \in C} x^{(C)}_i g_i + \sum_{(i, j ) \in S} y^{(p)}_{ij} g_{ij} = \frac{1}{|C|} \sum_{i \in C} g_i +  \sum_{\{i, j\} \in S} g_{ij} = \frac{1}{|C|}(\alpha + 2|C| - 2) + \sigma_S
	\end{equation}
	where we used \cref{peq:gic} in the last equality. We also have
	\begin{equation} \label{peq:gv}
		\Sc{g_V}{v_V} =	\Sc{g_C}{v_C} + \Sc{g_{\bc}}{v_{\bc}} \leq \frac{\alpha + 2|C| - 2}{|C|} \sigma_C + (1 - \sigma_C) m_{\bc} \leq \frac{\alpha + 2|C| - 2}{|C|} 
	\end{equation}
	where we used \cref{peq:gic} together with the H\"older inequality in the first inequality, $m_{\bc} \le \frac{2|C|-2}{|C|}$ in the second inequality and $\sigma_C \leq 1$. Finally, 
	\begin{equation} \label{peq:ge}
		\Sc{g_{\bar{E}}}{v_{\bar{E}}} = \Sc{g_S}{v_S} + \Sc{g_{\bs}}{v_{\bs}} = \Sc{g_S}{v_S} \leq \sigma_S
	\end{equation}
	where we used $g_{\bar{S}} = 0$ in the second equality, and $v_i \leq 1$ for every $i \in \bE$ in the inequality. We can conclude 
	\begin{equation} \label{peq:1c}
		\Sc{g}{d} = \Sc{g_V}{v_V} + \Sc{g_{\bar{E}}}{v_{\bar{E}}} - \Sc{g}{p} \leq 0  
	\end{equation}
	where we used \cref{peq:ge}, \cref{peq:gp} and \cref{peq:gv} in the inequality. We have equality iff there is equality both in \cref{peq:gv} and \cref{peq:ge}, and thus iff $v = (x^{(v)}, y^{(v)})$ with $\tx{supp}(x^{(v)}) = C$ and $y^{(v)} = y^{(p)}$. In particular $p$ is a first order stationary point with 
	\begin{equation}
		T_{\p}^0(p, g) = \{d \in T_{\p}(p) \ | \ d = v - p, v_{\bc} = 0, v_{\bar{E}} = p_{\bar{E}} \} = \{d \in T_{\p}(p) \ | \ d_{\bc} = d_{\bar{E}} = 0\} \, .
	\end{equation}
	Let $H_C$ be the submatrix of $H$ with indices in $C$. We have, for $(i, j) \in C^2$ with $i \neq j$, $H_{ij} = 1$ since $C$ is a clique in the augmented graph $\G(y_p)$, while $H_{ii} = \alpha$ for every $i \in V$ and in particular for every $i \in C$. This proves 
	\begin{equation}
		H_C = 2e e^{\intercal} + (\alpha - 2) \mathbb{I} \, .
	\end{equation}
	Now if $T_{\p}^0(p, g) \ni d \neq 0$ we have 
	\begin{equation}\label{peq:2c}
		d^{\intercal} H d = d_C^{\intercal} H_C d_C = d_C^{\intercal}(2e e^{\intercal} + (\alpha - 2) \mathbb{I}) d_C = (\alpha - 2) \n{d_C}^2 < 0
	\end{equation}
	where we used $d_{\bc} = d_{\bE} = 0$ in the first equality, $e^{\intercal} d_C = e^{\intercal} (v_V-p_V)=1-1=0$ in the third one. This proves the claim, since we have sufficient conditions for local optimality thanks to \cref{peq:1c} and \cref{peq:2c}. 
\end{proof}
As a corollary, the global optimum of $h_{\G}$ is achieved on maximum $s$-defective cliques.
\begin{cor} \label{c:globalm}
	The global maximizers of $h_{\G}(z)$ are all the points $p$ of the form $p = (x^{C^*}, y^{(p)})$ where $C^*$ is an $s$-defective clique of maximum cardinality, and $y^{(p)} \in D_s(\G)$ such that $e^{\intercal} y^{(p)} = s$.
\end{cor}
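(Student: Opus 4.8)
The plan is to reduce the corollary to the explicit value formula \cref{loval} established in \cref{p:lm}, turning the statement into a purely combinatorial optimization. Since $\p$ is compact and $\hg$ is continuous, a global maximizer $p^*$ exists; being in particular a local maximizer, it must by \cref{p:lm} have the form $p^* = (x^{(C)}, y^{(p^*)})$ with $C$ an $s$-defective clique, $e^{\ti} y^{(p^*)} =: k \in \N$, $k \le s$, and value
\begin{equation*}
  \hg(p^*) = 1 - \frac{2-\alpha}{2|C|} + k\,\frac{\beta}{2}\,.
\end{equation*}
Hence the global maximum equals the maximum of the right-hand side over all admissible pairs $(C,k)$ arising from local maximizers, and the whole claim follows once this explicit expression is optimized.

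First I would exploit the monotonicity of this value. Because $\beta > 0$, it strictly increases with $k = e^{\ti} y^{(p^*)}$, so a global maximizer must exhaust the budget, $k = s$; and because $\alpha \in (0,2)$ gives $2 - \alpha > 0$, it also strictly increases with $|C|$, forcing $|C|$ to be as large as possible, i.e.\ equal to the maximum cardinality $\omega$ of an $s$-defective clique in $\G$. This already shows that every global maximizer is of the form $(x^{C^*}, y^{(p)})$ with $C^*$ a maximum $s$-defective clique and $e^{\ti} y^{(p)} = s$ (the remaining local-maximum structure of \cref{p:lm}(iii), in particular $y^{(p)}_{ij} = 1$ on the non-edges inside $C^*$, being inherited automatically), and that the optimal value is $1 - \frac{2-\alpha}{2\omega} + \frac{\beta s}{2}$.

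It then remains to prove \emph{attainability}, namely that this value is realized, which amounts to exhibiting for a maximum $s$-defective clique $C^*$ a feasible $y^{(p)} \in D_s(\G)$ with $e^{\ti} y^{(p)} = s$ such that $(x^{C^*}, y^{(p)})$ satisfies \cref{p:lm}(iii). I would set $y^{(p)}_{ij} = 1$ on the $s_0 := \left|{C^* \choose 2}\cap\bE\right|$ edges missing inside $C^*$ and place the remaining $s - s_0$ unit entries on arbitrary further non-edges (possible since $s \le |\bE|$). The delicate point, which I expect to be the main obstacle, is verifying the maximality of $C^*$ in $\G(y^{(p)})$ demanded by \cref{p:lm}(iii). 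Here the key observation is that it is automatic: if some $v \notin C^*$ were adjacent to all of $C^*$ in $\G(y^{(p)})$, then $C^* \cup \{v\}$ would be a clique in $\G(y^{(p)})$ and, since $\G(y^{(p)})$ is obtained from $\G$ by adding at most $s$ edges, an $s$-defective clique of cardinality $|C^*| + 1$, contradicting the maximality of $C^*$. This confirms that $(x^{C^*}, y^{(p)})$ is a genuine maximizer and, together with the previous paragraph, that these points are exactly the global maximizers.
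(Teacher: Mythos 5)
Your proposal is correct, and its backbone is the same as the paper's proof: both reduce the corollary to the value formula \cref{loval} of \cref{p:lm} and exploit that, since $2-\alpha>0$ and $\beta>0$, that value strictly increases with $|C|$ (and with the used budget). The difference is one of completeness. The paper's proof is two lines and only makes the forward inclusion explicit — every local (hence global) maximizer has value $1-\frac{2-\alpha}{2|C|}+s\frac{\beta}{2}$, which is largest exactly when $|C|$ is largest — leaving implicit the existence of a maximizer and, more importantly, the converse inclusion, namely that \emph{every} point of the stated form really is a global maximizer. You prove that converse: your observation that a maximum $s$-defective clique $C^*$ is \emph{automatically} a maximal clique of the augmented graph $\G(y^{(p)})$ (any common neighbor $v$ would make $C^*\cup\{v\}$ an $s$-defective clique of cardinality $|C^*|+1$) is precisely what is needed to invoke the implication (iii) $\Rightarrow$ (ii) of \cref{p:lm}, and it appears nowhere in the paper's argument. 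Two minor remarks: the budget-exhaustion step you derive by monotonicity in $k$ is already asserted in \cref{p:lm}(iii) itself (the paper writes $s=e^{\intercal}y^{(p)}$ there), so that portion of your argument is harmless duplication; and, as you implicitly note, the corollary's wording omits the requirement $y^{(p)}_{ij}=1$ on $\binom{C^*}{2}\cap\bE$, which your construction correctly imposes — without it the stated points need not even be local maximizers.
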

\begin{proof}
	Let $p = (x^{(C)}, y^{(p)})$ a local maximizer for $h_{\G}(z)$. Then its objective value is, by~\cref{loval},
	$h_{\G}(p) = 1 - \frac{2 - \alpha}{2|C|} + s\, \frac{\beta}{2} $, which is
	(globally) maximized when $|C|$ is as large as possible, because $2 - \alpha > 0$ by assumption.
\end{proof}
Thanks to \cref{p:lm}, for every $p \in \M_s(\G)$ we can define $y^{(p)} \in D_s(\G) $ and a maximal clique $C$ of $\G(y^{(p)})$ such that $p = (x^{(C)}, y^{(p)})$. We now recall that the face of a polytope $\Q$ exposed by a gradient $g \in \R^n$ is defined as
\begin{equation}
	\F_e(g) = \argmax_{w \in Q} \Sc{g}{w}.
\end{equation}
With this notation, we prove that the face of $\p$ exposed by the gradient in $p \in \M_s(\G)$ is simply the product between $\Delta^{(C)}$ and the singleton $\{y^{(p)}\}$. This property, sometimes referred to as strict complementarity, is of key importance to prove identification results for Frank-Wolfe variants (see \cite{bomze2019first}, \cite{bomze2020active}, \cite{garber2020revisiting}, and the discussion of external regularity in~\cite[Section~5.3]{bomze2002sirev}).
\begin{lem}\label{l:sc}
	Let $p = (x^{(C)}, y^{(p)}) \in \M_s(\G)$. Then the face exposed by $\nabla h_{\G}(p)$ coincides with the minimal face $\F(p)$ of $\p$ containing $p$:
	\begin{equation} \label{eq:fep}
		\F_{e}(\nabla h_{\G}(p)) = \F(p) = \Delta_{n - 1}^{(C)} \times \{y^{(p)}\} \, .
	\end{equation}
\end{lem}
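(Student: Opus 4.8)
The plan is to read the exposed face off directly from the first-order analysis already performed in the proof of \cref{p:lm}, and then to match it against the minimal face $\F(p)$ using the product structure of $\p$. Write $g = \nabla \hg(p)$. Since \cref{p:lm} shows that $p$ is a first-order stationary point, the linear form $w \mapsto \Sc{g}{w}$ is maximized over $\p$ at $p$, so
\begin{equation*}
	\F_e(g) = \argmax_{w \in \p}\Sc{g}{w} = \{v \in \p \ | \ \Sc{g}{(v - p)} = 0\}\, ,
\end{equation*}
and the task reduces to identifying exactly those feasible $v$ for which the estimate $\Sc{g}{(v-p)} \le 0$ of \cref{peq:1c} is tight.

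The next step is to revisit the two inequalities \cref{peq:gv} and \cref{peq:ge} whose tightness is needed. On the vertex block, $g_i = \tfrac{1}{|C|}(\alpha + 2|C| - 2)$ is constant on $C$ by \cref{peq:gic}, while $m_{\bc} \le \tfrac{2|C|-2}{|C|} < \tfrac{1}{|C|}(\alpha + 2|C| - 2)$ by \cref{peq:gbic} since $\alpha > 0$; hence equality in \cref{peq:gv} forces $\sigma_C = 1$, so $v_{\bc} = 0$ and $v_V \in \Delta_{n-1}^{(C)}$. On the edge block, $g_{ij} \ge \beta > 0$ on $S$ while $g_{\bs} = 0$, so equality in \cref{peq:ge} forces $v_{ij} = 1$ for all $\{i,j\}\in S$; as $v \in \ds$ and $|S| = s$, this in turn forces $v_{ij} = 0$ on $\bs$, i.e.\ $v_{\bE} = y^{(p)}$. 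Conversely, any $v$ with $v_{\bc} = 0$ and $v_{\bE} = y^{(p)}$ attains $\Sc{g}{(v-p)} = 0$, because $g$ is constant on $C$ and the edge part is then fixed (compare \cref{peq:gp}). This yields $\F_e(g) = \Delta_{n-1}^{(C)} \times \{y^{(p)}\}$.

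It remains to verify that this set is the minimal face $\F(p)$. Since $\p = \Delta_{n-1} \times \ds$ is a product polytope, the minimal face containing $p = (x^{(C)}, y^{(p)})$ is the product of the minimal faces of the two factors. For the simplex factor the minimal face containing $x^{(C)}$ is $\Delta_{n-1}^{(C)}$ by definition. For the second factor, $y^{(p)} \in \{0,1\}^{\bE}$ with $e^{\intercal} y^{(p)} = s \in \N$ is a vertex of $\ds$ — the box constraints $y_{ij} = 1$ on $S$ and $y_{ij} = 0$ on $\bs$ active at it already pin it down uniquely — so its minimal face is the singleton $\{y^{(p)}\}$. Hence $\F(p) = \Delta_{n-1}^{(C)} \times \{y^{(p)}\}$, matching $\F_e(g)$.

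I expect no conceptual obstacle here: the hard estimates are inherited verbatim from \cref{p:lm}, and the real content is the bookkeeping that turns the two strict inequalities — $\alpha > 0$ on the vertex side and $\beta > 0$ on the edge side — into the statement that the equality set is no larger than $\Delta_{n-1}^{(C)} \times \{y^{(p)}\}$. This is precisely the strict-complementarity phenomenon the lemma records, and the only extra point to argue carefully is that $y^{(p)}$ is a vertex, so that the $y$-factor of the minimal face collapses to a single point.
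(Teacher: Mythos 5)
Your proof is correct, and it reaches \cref{eq:fep} by a genuinely different (though closely related) route than the paper. The paper proves the first equality vertex by vertex: for every vertex $a=(a_x,a_y)$ of $\p$ outside $\F(p)$ it shows the multiplier $\lambda_a(p)$ is strictly negative, re-deriving the $x$-block bound from maximality of $C$ in $\G(y^{(p)})$ (via \cref{eq:x1piece} and \cref{eq:x2piece}) and handling the $y$-block by a support-cardinality comparison ($|S_y|\le s=|S_p|$ and $S_y\neq S_p$ force $S_p\sm S_y\neq\emptyset$, so a strictly positive partial derivative gets dropped). You instead write $\F_e(\nabla\hg(p))$ as the equality set $\{v\in\p \ | \ \Sc{\nabla\hg(p)}{(v-p)}=0\}$, which is legitimate because $p$ is stationary, and then recycle the tightness analysis of \cref{peq:gv} and \cref{peq:ge} from the proof of \cref{p:lm}: $\alpha>0$ forces $\sigma_C=1$, hence $v_{\bc}=0$, and $\beta>0$ forces $v_S=e$, after which the budget constraint $e^{\intercal}v_{\bE}\le s=|S|$ forces $v_{\bs}=0$. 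This budget argument replaces the paper's support-counting step, works for arbitrary points of $\p$ rather than only vertices, and states the equality set correctly as $\supp(x^{(v)})\subseteq C$ (the corresponding sentence in the proof of \cref{p:lm} writes $\supp(x^{(v)})=C$, a slight overstatement that your version silently corrects). What the paper's vertex-based computation buys in exchange is explicit quantitative lower bounds on the gaps $-\lambda_a(p)$, e.g.\ $\alpha/|C|$ on the $x$-block, which is essentially the computation reused for the identification radius in Section 5 (cf.\ \cref{lambdaineq}). Your handling of the second equality --- the minimal face of a product polytope is the product of the minimal faces, and $y^{(p)}$ is a vertex of $\ds$ because its active box constraints pin it down uniquely --- matches the paper's, just spelled out in more detail.
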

\begin{proof}
	To start with, the second equality follows from the fact that $y^{(p)}$ is a vertex of $D_s'(\G)$ and that $\Delta_{n - 1}^{(C)}$ is the minimal face of $\Delta_{n  - 1}$ containing $x^{(C)}$. The first equality is then equivalent to proving that for every vertex $a = (a_x, a_y)$ of $\p$ with $a\in \p \sm \F(p)$ we have $\lambda_a(p) < 0$. Given that stationarity conditions must hold in $\Delta_{n - 1}$ and $D_s'(\G)$ separately,
	$\lambda_a(p) < 0$ if and only if
	\begin{subequations}
		\begin{align}
			\lambda^x_{a}(p):=& \Sc{\nabla_x h_{\G}(p)}{(a_x - x^{(C)})} \leq 0\, , \label{eq:xc}\\
			\lambda^{y}_a(p):= &\Sc{\nabla_y h_{\G}(p)}{(a_y - y^{(p)})} \leq 0 \, , \label{eq:yc}
		\end{align}	
	\end{subequations}
	and at least one of these relations must be strict. Since $a$ is a vertex of $\p$,  $a_x = e_l$ with $l \in [1:n]$ and $a_y \in D_s(\G)$, while  $a \notin \F(p)$ implies $l \notin C$ or $a_y \neq y^{(p)}$. If $l \in C$ then $\lambda^x_{a}(p) = 0$ by stationarity conditions, otherwise
	\begin{equation} \label{eq:x1piece}
		\Sc{\nabla_x h_{\G}(p)}{x^{(C)}} = 2 (x^{(C)})^{\intercal}[A + A(y^{(p)})] x^{(C)} + \alpha \n{x^{(C)}}^2 = 2 - \frac{2 - \alpha}{|C|}
	\end{equation}  
	and
	\begin{equation} \label{eq:x2piece}
		\Sc{\nabla_x h_{\G}(p)}{a_x} = \frac{\partial}{\partial x_l} h_{\G}(p) = \alpha x_l  + \sum_{j \in C \cap E^{(p)}(l)}2 x_j = 2\frac{|C \cap E^{(p)}(l)|}{|C|} \leq 2 - \frac{2}{|C|} \, ,
	\end{equation} 
	where we used $a_x = e_l$ in the first equality, $l \notin C$ together with $x_j = 1/|C|$ for every $j \in C$ in the third equality, and the maximality of the clique $C$ in the augmented graph $\G(y^{(p)})$ in the inequality. Combining \cref{eq:x1piece} and \cref{eq:x2piece}, we obtain
	\begin{equation}
		\Sc{\nabla_x h_{\G}(p)}{(a_x - x^{(C)})} \leq - \frac{\alpha}{|C|} < 0 \, ,
	\end{equation}
	which proves that \cref{eq:xc} holds with strict inequality if $l\notin C$, or else with equality if $l\in C$. \\
	In a similar vain we proceed with~\cref{eq:yc}. If $a_y=y^{(p)}$ then~\cref{eq:yc} holds with equality but then $l\in V\sm C$ and we are done. So suppose $a_y\neq y^{(p)}$, and consider the supports $S_y = \{\{i, j\} \in \bar{E} \ | \ (a_y)_{ij} = 1\}$ and $S_p = \{\{i, j\} \in \bar{E} \ | \ y^{(p)}_{ij} = 1\}$. Since $a_y\in D_s(\G)$, we have $|S_y|\leq s$ and on the other hand, by Proposition~\ref{p:lm}(iii), $|S_p| =s$. As $S_y$ and $S_p$ must be distinct, we conclude $S_y\sm S_p\neq \emptyset$.
	Furthermore, by~\cref{eq:dph} for every $\{i, j\}$ in $A_p$ we have
	\begin{equation} \label{eq:apy1}
		\frac{\partial}{\partial y_{ij}} h_{\G}(p) \geq \beta y^{(p)}_{ij} = \beta > 0 \, ,
	\end{equation}
	while for every $\{i, j\}$ in $A_y \sm A_p$ we have
	\begin{equation} \label{eq:apy2}
		\frac{\partial}{\partial y_{ij}} h_{\G}(p)  = 0
	\end{equation}
	because $y^{(p)}_{ij} = 0$ by definition of $A_p$ and $x^{(C)}_{i} x^{(C)}_j = 0$ since, again invoking~\cref{p:lm}(iii), $\{i, j\}\in \bar E\setminus {C \choose 2}$. So we can finally prove \cref{eq:yc} by observing
	\begin{equation}
		\begin{aligned}
			& \Sc{\nabla_y h_{\G}(p)}{(a_y - y^{(p)})}  = \sum_{\{i, j\} \in A_y} \frac{\partial}{\partial y_{ij}} h_{\G}(p) - \sum_{\{i, j\} \in A_p} \frac{\partial}{\partial y_{ij}} h_{\G}(p) \\ = & \sum_{\{i, j\} \in A_y \sm A_p} \frac{\partial}{\partial y_{ij}} h_{\G}(p) - \sum_{\{i, j\} \in A_p \sm A_y} \frac{\partial}{\partial y_{ij}} h_{\G}(p) = 
			-  \sum_{\{i, j\} \in A_p \sm A_y} \frac{\partial}{\partial y_{ij}} h_{\G}(p) < 0
		\end{aligned}
	\end{equation}
	where we used \cref{eq:apy2} in the third equality and \cref{eq:apy1} together with $A_p\sm A_y \neq \emptyset$ in the inequality. 
\end{proof}

\section{Frank-Wolfe method with in face directions} \label{Sec:FDFW}

Let $\Q = \tx{conv}(A) \subset \R^n $ with $|A| < +\infty$.  In this section, we consider the FDFW for the solution of the smooth constrained optimization problem
\begin{equation*}
	\max \{ f(w) \ | \ w \in \Q \} \, .
\end{equation*}
In particular, $\{w_k\}$ is always a sequence generated by the FDFW applied to the polytope $\Q$ with objective $f$.
For $w \in \Q$ we denote with $\F(w)$ the minimal face of $\Q$ containing $w$. 	
\begin{algorithm}
	\caption{Frank-Wolfe method with in face directions (FDFW) on a polytope}
	\label{alg:FW}
	\begin{algorithmic}[1]
		\STATE{\textbf{Initialize} $w_0 \in \Q$, $k := 0$}
		
		\STATE{Let $s_k \in \argmax_{y \in \Q}\Sc{\nabla f(w_k)}{y}$ and $d_k^{\mathcal{FW}} := s_k - w_k$.} \label{st:FW}
		\IF{$w_k$ is stationary}
		\STATE{STOP}
		\ENDIF
		\STATE{Let $v_k \in \argmin_{y \in \F(w_k)} \Sc{\nabla f(w_k)}{y}$ and $d_k^{\mathcal{FD}} := w_k - v_k$.}
		\IF{$\Sc{\nabla f(w_k)}{d_k^{\FW}} \geq \Sc{\nabla f(w_k)}{d_k^{\FD}}$}
		\STATE{$d_k := d_k^{\FW}$}
		\ELSE
		\STATE{$d_k := d_k^{\FD}$}\label{st:inface}
		\ENDIF
		\STATE{Choose the stepsize $\alpha_k \in (0, \alpha_{k}^{\max}]$ with a suitable criterion}\label{st:stepsize}
		\STATE{Update: $w_{k+1} := w_k + \alpha_k d_k$}
		\STATE{Set $k : = k+1$. Go to step 2.}
	\end{algorithmic}
\end{algorithm}
The FDFW at every iteration chooses between the classic FW direction $d_k^{\FW}$ calculated at \cref{st:FW} and the in face direction $d_k^{\mathcal{FD}}$ calculated at \cref{st:inface} with the criterion in \cref{st:stepsize}. The classic FW direction points toward the vertex maximizing the scalar product with the current gradient, or equivalently the vertex maximizing the first order approximation $w \mapsto f(w_k) + \Sc{\nabla f(w_k)}{w}$ of the objective $f$. The in face direction $d_k^{\mathcal{FD}}$ is always a feasible direction in $\F(w_k)$ from $w_k$, and it points away from the vertex of the face minimizing the first order approximation of the objective. When the algorithm performs an in face step, we have that the minimal face containing the current iterate either stays the same or its dimension drops by one. The latter case occurs when the method performs a maximal feasible in face step (i.e., a step with $\alpha_k = \alpha_k^{\max}$ and $d_k = d_k^{\FD}$), generating a point on the boundary of the current minimal face. As we will see formally in  \cref{p:FDFWli}, this drop in dimension is what allows the method to quickly identify low dimensional faces containing solutions.\\
We often require the following lower bound on the stepsizes:
\begin{equation} \label{deq:alphalb}
	\alpha_k \geq  \bar{\alpha}_k := \min(\alpha_k^{\max}, \cC\frac{\Sc{\nabla f(w_k)}{d_k}}{\n{d_k}^2}) \tag{S1}
\end{equation}
for some $\cC > 0$. Furthermore, for some convergence results we need the following sufficient increase condition for some constant $\rho >0$:
\begin{equation} \label{eq:deltaflb}
	\fun(w_k + \alpha_k d_k) - \fun(w_k) \geq \rho \bar{\alpha}_k \, \Sc{\nabla \fun(w_k)}{d_k} \tag{S2} \, .
\end{equation}
As explained in the Appendix (see \cref{alphacond}), these conditions generalize properties of exact and Armijo line search. \\

We also define the multiplier functions $\lambda_a$ for $a \in A$, $w\in \R^n$ as
\begin{equation}
	\lambda_a(w) = \Sc{\nabla f(w)}{(a - w)} \, .
\end{equation}
We adapt the well known FW gap  (\cite{bomze2020active}, \cite{lacoste2016convergence}) to the maximization case, thus obtaining the following measure of stationarity
\begin{equation} \label{def:gz}
	G(w) :=  \max_{y \in \Q} \Sc{\nabla f(w)}{(w-y)} = \max_{a \in A} \Sc{\nabla f(w)}{(w - a)} = \max_{a \in A}- \lambda_a(w) \, ,
\end{equation}
as well as an "in face" gap 
\begin{equation} \label{def:gfz}
	G_{\F}(w) = \max (G(w), \max_{b \in \F(w) \cap A} \lambda_b(w)) \, .
\end{equation}
Using these definitions, we have
\begin{equation} \label{eq:gfz}
	\begin{aligned}
		&\Sc{\nabla f(w_k)}{d_k} = \max (\Sc{\nabla f(w_k)}{d_k^{\FW}}, \Sc{\nabla f(w_k)}{d_k^{\FD}}) \\
		= &\max (G(w_k), \max_{y \in \F(w_k)} \Sc{\nabla f(w_k)}{(w_k - y)}) =  G_{\F}(w_k) \, ,	
	\end{aligned}
\end{equation}
where in the second equality we used
\begin{equation} \label{eq:dkfw}
	\begin{aligned}
		\Sc{\nabla f(w_k)}{d_k^{\FW}} = \max_{y \in Q} \Sc{\nabla f(w_k)}{(y - w_k)}	&	
	\end{aligned}
\end{equation} 
and in the third equality  
\begin{equation} \label{eq:fdl}
	\Sc{\nabla f(w_k)}{d_k^{\FD}} = \max_{b \in \F(w_k)} \Sc{\nabla f(w_k)}{(w_k - b)} = \max_{b \in \F(w_k) \cap A} - \lambda_b(w_k) \, . 
\end{equation}
From the definitions it also immediately follows
\begin{equation}
	G_{\F}(w) \geq G(w) \geq 0
\end{equation}
with equality iff $w$ is a stationary point. \\
In order to obtain a local identification result, we need to prove that under certain conditions the method does consecutive maximal in face steps, thus identifying a low dimensional face containing a minimizer. First, in the following lemma we give an upper bound for the maximal feasible stepsize. 
\begin{lem} \label{l:alphakineq}
	If $w_k$ is not stationary, then $\alpha_k \leq G(w_k) / G_{\F}(w_k)$. 
\end{lem}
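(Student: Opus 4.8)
The plan is to read the bound straight off the feasibility of the next iterate, combined with the identity $\Sc{\nabla f(w_k)}{d_k} = G_{\F}(w_k)$ recorded in~\cref{eq:gfz} and the fact that the plain Frank--Wolfe gap $G(w_k)$ upper-bounds the first-order gain of \emph{any} point of $\Q$. No case split between a Frank--Wolfe step and an in-face step is required: a single feasibility estimate handles both simultaneously, and only the upper feasibility $\alpha_k \le \alpha_k^{\max}$ is used (the lower-bound conditions~\cref{deq:alphalb} and~\cref{eq:deltaflb} play no role here).

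First I would note that, since $\alpha_k \in (0, \alpha_k^{\max}]$, the update $w_{k+1} = w_k + \alpha_k d_k$ stays in $\Q$. By~\cref{eq:dkfw} and~\cref{eq:gfz} the gap can be written as $G(w_k) = \max_{y \in \Q} \Sc{\nabla f(w_k)}{(y - w_k)}$, so evaluating this maximum at the feasible choice $y = w_{k+1}$ gives
\begin{equation*}
	G(w_k) \;\geq\; \Sc{\nabla f(w_k)}{(w_{k+1} - w_k)} \;=\; \alpha_k\, \Sc{\nabla f(w_k)}{d_k} \;=\; \alpha_k\, G_{\F}(w_k),
\end{equation*}
where the last equality is again~\cref{eq:gfz}. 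Since $w_k$ is not stationary we have $G_{\F}(w_k) \geq G(w_k) > 0$, so dividing by $G_{\F}(w_k)$ yields precisely $\alpha_k \leq G(w_k)/G_{\F}(w_k)$.

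The computation is essentially one line, so the only real pitfall is orientation: as we are maximizing, both the identity $\Sc{\nabla f(w_k)}{d_k} = G_{\F}(w_k)$ and the feasibility inequality $\Sc{\nabla f(w_k)}{(w_{k+1}-w_k)} \leq G(w_k)$ must be taken in ascent form, so that the two estimates point the same way before I divide. As a sanity check, in the degenerate case $G_{\F}(w_k) = G(w_k)$ --- a pure Frank--Wolfe step --- the bound collapses to $\alpha_k \leq 1$, consistent with $\alpha_k^{\max} = 1$ for such a step, so the estimate is never vacuous.
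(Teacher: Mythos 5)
Your proof is correct and is essentially the paper's own argument: both proofs bound $\alpha_k \Sc{\nabla f(w_k)}{d_k} = \Sc{\nabla f(w_k)}{(w_{k+1}-w_k)}$ by $G(w_k)$ using only the feasibility $w_{k+1} = w_k + \alpha_k d_k \in \Q$, then invoke the identity $\Sc{\nabla f(w_k)}{d_k} = G_{\F}(w_k)$ from \cref{eq:gfz} together with $G_{\F}(w_k) \geq G(w_k) > 0$ at a non-stationary point to divide. The only cosmetic difference is that you write the feasibility estimate directly in gap form, whereas the paper states it as $\Sc{\nabla f(w_k)}{(w_k + \alpha_k d_k)} \leq \max_{y \in \Q}\Sc{\nabla f(w_k)}{y}$ and subtracts $\Sc{\nabla f(w_k)}{w_k}$ afterwards.
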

\begin{proof}
	Notice that since $w_k$ is not stationary we have $G(w_k) > 0$ and therefore also $G_{\F}(w_k) > 0$. Now 
	\begin{equation*}
		\Sc{\nabla f(w_k)}{(w_k + \alpha_kd_k)} \leq \max_{y \in \Q} \Sc{\nabla f(w_k)}{y} = 	\Sc{\nabla f(w_k)}{(w_k +  d_k^{\FW})} = \Sc{\nabla f(w_k)}{w_k} + G(w_k) \, ,
	\end{equation*} 
	where in the inequality we used $w_k + \alpha_k d_k \in \Q$. Subtracting $\Sc{\nabla f(w_k)}{w_k}$ on both sides we obtain
	\begin{equation}
		\alpha_k \Sc{\nabla f(w_k)}{d_k} \leq G(w_k) \, .
	\end{equation}
	and the thesis follows by applying \cref{eq:gfz} to the LHS.
\end{proof}
We can now prove a local identification result.
\begin{prop}[FDFW local identification] \label{p:FDFWli}
	Let $p$ be a stationary point for $f$ defined on $\Q$ and assume that \cref{deq:alphalb} holds. We have the following properties:
	\begin{itemize}
		\item[(a)]there exists $r^*(p) > 0$ such that if $w_{k} \in B_{r^*(p)}(p) \cap \F_e(\nabla f(p))$ then $w_{k + 1} \in \F_e(\nabla f(p))$;
		\item[(b)] for any $\delta > 0$ there exists $r(\delta, p) \leq \delta$ such that if $w_{k} \in B_{r(\delta, p)}(p)$ then  $w_{k + j} \in \F_e(\nabla f(p)) \cap B_{\delta}(p)$ for some $j \leq \dim (\F(w_{k}))$.
	\end{itemize} 
\end{prop}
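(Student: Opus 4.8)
The plan is to make everything hinge on the strict-complementarity gap at $p$. Since $p$ is stationary it maximizes the linearization $w\mapsto\Sc{\nabla f(p)}{w}$ over $\Q$, so $p\in\F_e(\nabla f(p))$, and setting $M:=\max_{a\in A}\Sc{\nabla f(p)}{a}$ I would introduce
\[
\mu:=\min_{b\in A\sm\F_e(\nabla f(p))}\bigl(M-\Sc{\nabla f(p)}{b}\bigr)>0 ,
\]
which is strictly positive because $A$ is finite and every such $b$ lies off the exposed face (if $A\sm\F_e(\nabla f(p))=\emptyset$ then $\F_e(\nabla f(p))=\Q$ and both claims are vacuous). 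Two continuity facts then do the work: $G$ is a finite maximum of continuous functions with $G(p)=0$, so $G(w)\to 0$ as $w\to p$; and $w\mapsto\Sc{\nabla f(w)}{w-b}$ is continuous with value $\ge\mu$ at $p$ for each $b$ off the face, so it stays $\ge\mu/2$ on a ball around $p$.

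For part (a) I would take $w_k\in\F_e(\nabla f(p))\cap B_{r^*(p)}(p)$ and argue by cases on the direction chosen. Shrinking $r^*(p)$ so that the $\mu$-gap persists, the continuity of $\nabla f$ forces the FW vertex $s_k$ (a maximizer of $\Sc{\nabla f(w_k)}{\cdot}$) to remain in $\F_e(\nabla f(p))$; then an FW step produces $w_{k+1}=(1-\alpha_k)w_k+\alpha_k s_k$, a convex combination of two points of the (convex) face, hence $w_{k+1}\in\F_e(\nabla f(p))$. An in-face step keeps $w_{k+1}$ inside $\F(w_k)$, and since $\F_e(\nabla f(p))$ is a face containing $w_k$ we have $\F(w_k)\subseteq\F_e(\nabla f(p))$, so again $w_{k+1}\in\F_e(\nabla f(p))$. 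Either way the face is preserved.

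For part (b) the first step is to show that near $p$ but off the exposed face the algorithm is forced into a maximal in-face step. If $w_k\notin\F_e(\nabla f(p))$ then, writing $w_k$ as a convex combination of the vertices of $\F(w_k)$, at least one such vertex $b^*$ lies off the face; by the second continuity fact $\Sc{\nabla f(w_k)}{w_k-b^*}\ge\mu/2$, so the in-face gap, and with it $\Sc{\nabla f(w_k)}{d_k^{\FD}}$ and $G_\F(w_k)$ (via \cref{eq:gfz}), is $\ge\mu/2$, whereas $G(w_k)<\mu/2$ once $w_k$ is close enough; hence $d_k=d_k^{\FD}$. To see the step is maximal I would combine \cref{l:alphakineq}, which gives $\alpha_k\le G(w_k)/G_\F(w_k)\le 2G(w_k)/\mu$, with the lower bound \cref{deq:alphalb}: since $G_\F(w_k)\ge\mu/2$ and $\n{d_k}\le\tx{diam}(\Q)$, the term $\cC\, G_\F(w_k)/\n{d_k}^2$ exceeds a fixed constant $\eta:=\cC\mu/(2\,\tx{diam}(\Q)^2)$, so once $2G(w_k)/\mu<\eta$ the minimum defining $\bar\alpha_k$ must equal $\alpha_k^{\max}$; then $\alpha_k^{\max}=\bar\alpha_k\le\alpha_k\le\alpha_k^{\max}$ forces $\alpha_k=\alpha_k^{\max}$. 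A maximal in-face step lands on the boundary of $\F(w_k)$, dropping $\dim\F(\cdot)$ by at least one.

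I would close by iterating: while the iterate stays near $p$ and off $\F_e(\nabla f(p))$, every step is such a maximal in-face step, so the minimal-face dimension strictly decreases and after at most $\dim\F(w_k)$ steps the iterate either enters $\F_e(\nabla f(p))$ or reaches a vertex --- and a vertex lying in a small enough ball around $p$ must be in $\F_e(\nabla f(p))$, since the finitely many vertices off the face are bounded away from $p$. Once inside the face, part (a) keeps it there, which yields the index bound $j\le\dim\F(w_k)$. The hard part is the quantitative bookkeeping that keeps the entire finite trajectory inside $B_\delta(p)$ so that the ``near $p$'' estimates remain valid at each of the $\le\dim\F(w_k)$ steps: each maximal step has length $\n{w_{k+1}-w_k}=\alpha_k^{\max}\n{d_k}\le 2G(w_k)\,\tx{diam}(\Q)/\mu$, which is small near $p$, so I would fix $\varepsilon$ with $\dim(\Q)\cdot 2\varepsilon\,\tx{diam}(\Q)/\mu$ small, pick $\rho_\varepsilon$ with $G\le\varepsilon$ on $B_{\rho_\varepsilon}(p)$, shrink it so that $B_{\rho_\varepsilon}(p)$ avoids every vertex off the face, and finally choose $r(\delta,p)\le\delta$ small enough that the cumulative displacement over $\dim\F(w_k)$ steps never leaves $B_\delta(p)\cap B_{\rho_\varepsilon}(p)$. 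This nested-radius induction is routine in spirit but is exactly where all the constants must be reconciled.
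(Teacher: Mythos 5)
Your part (a), your argument that near $p$ but off $\F_e(\nabla f(p))$ the in-face direction must be selected, and your proof that the resulting step must be maximal (playing \cref{l:alphakineq} against \cref{deq:alphalb}, with the in-face gap bounded below by a strict-complementarity constant $\mu$) all coincide with the paper's proof; so does the dimension-counting conclusion. The genuine gap is in the final containment argument, precisely the part you dismiss as ``routine in spirit''. Your scheme uses a \emph{single} ball $B_{\rho_\varepsilon}(p)$ on which $G\le\varepsilon$, bounds each maximal in-face step by $2\varepsilon D/\mu$ (with $D$ the diameter of $\Q$), and then proposes to choose $r(\delta,p)$ small enough that the cumulative displacement keeps the trajectory inside $B_{\rho_\varepsilon}(p)$. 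But the cumulative bound $n\cdot 2\varepsilon D/\mu$ does not shrink when $r$ shrinks, so closing the induction requires $n\cdot 2\varepsilon D/\mu<\rho_\varepsilon$, a coupling between $\varepsilon$ and $\rho_\varepsilon$ that can be unsatisfiable: $G$ generically grows \emph{linearly} away from $p$ with slope of order $\mu$. Concretely, take $\Q=\Delta_2\subset\R^3$, $p=e_1$ stationary with $\F_e(\nabla f(p))=\{e_1\}$; for $w=(1-t)e_1+te_2$ one has $G(w)\ge \Sc{\nabla f(w)}{(e_1-w)}\ge t\mu/2$ for small $t$, hence $\rho_\varepsilon\le 2\sqrt{2}\,\varepsilon/\mu$, and the required inequality $n\cdot 2\varepsilon D/\mu<\rho_\varepsilon$ would force $nD<\sqrt{2}$, which is false. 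So with your choice of constants the ``near $p$'' estimates cannot be guaranteed to remain valid beyond the first step.

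The paper resolves exactly this difficulty with a genuinely nested construction: a decreasing chain of radii $r^{(0)}>r^{(1)}>\dots>r^{(n)}$ chosen \emph{recursively}, where the per-step displacement bound $DM_l$ (see \cref{eq:Ml}) is computed on the smaller ball $B_{(l)}(p)$ while the iterate is only required to land in the strictly larger ball $B_{(l-1)}(p)$, via the condition $r^{(l)}+DM_l<r^{(l-1)}$ in \cref{eq:c2}. This decouples the two radii: $r^{(l-1)}$ is fixed \emph{before} $r^{(l)}$ is chosen, and since $M_l\to 0$ as $r^{(l)}\to 0$ (by continuity of the multipliers, with $G_{\F}$ bounded below off the face), the condition is always satisfiable -- no relation between the modulus of continuity of $G$ and $\mu$ is needed. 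Alternatively, your single-ball scheme could be repaired by a multiplicative argument: since $G$ is Lipschitz with $G(p)=0$, each step inflates $\n{w-p}$ by a factor at most $1+2\Lambda D/\mu$, so one may take $r(\delta,p)=\min(\delta,\rho_0)\,(1+2\Lambda D/\mu)^{-n}$; but this uses a quantitative (Lipschitz) bound on $G$ rather than the additive $\varepsilon$--$\rho_\varepsilon$ bookkeeping you describe, and is not what you wrote.
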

\begin{proof}
	(a) Notice that by definition of exposed face and stationarity conditions
	\begin{equation} \label{eq:lam}
		\lambda_a(p) \leq 0
	\end{equation}
	for every $a \in A$, with equality iff $a \in \F_e(\nabla f(p))$. Then by continuity we can take $r^*(p)$ small enough so that $\lambda_a(w) < 0$ for every $a \in A \sm (A \cap \F_e(\nabla f(p)))$. Under this condition, if $w_k \in B_{r^*(p)}(p)$ then the method cannot select a FW direction pointing toward an atom outside the exposed face $\F_e(\nabla f(p))$, because all the atoms maximizing the RHS of \cref{def:gz} must necessarily be in $\F_e(\nabla f(p))$. In particular if $w_k \in  B_{r^*(p)}(p)\cap \F_e(\nabla f(p))$ then the method selects either an in face direction or a FW direction pointing toward a vertex in $\F_e(\nabla f(p))$. In both cases, $w_{ k + 1} \in \F_e(\nabla f(p))$. \\
	\newcommand{\rp}[1]{r^{(#1)}(\delta, p) }
	(b) Let $D$ be the diameter of $\Q$. We now consider $r^{(0)}(\delta, p) \leq \min(\delta, r^*(p))$ such that, for every $w \in B_{r^{(0)}(\delta, p)}(p)$
	\begin{equation} \label{eq:c1}
		\max_{a \in A} \lambda_a(w) < \min_{b \in A \sm  \F_e(\nabla f(p))}\min(-\lambda_b(w), \frac{\cC}{D^2} \lambda_b(w)^2) 	\, .	
	\end{equation}
	As we will see in the rest of the proof this upper bound together with \cref{l:alphakineq} ensures in particular that the FDFW performs maximal in face steps in $B_{r^{(0)}(\delta, p)}(p) \sm \F_e(\nabla f(p))$.	Furthermore, \cref{eq:c1} can always be satisfied thanks to \cref{eq:lam} and by the continuity of multipliers. We then define recursively for $1\leq l \leq n$ a sequence $r^{(l)}(\delta, p) \leq \rp{l - 1}$ of radii small enough so that, for 
	\begin{equation} \label{eq:Ml}
		M_l = \sup_{w \in B_{(l)}(p) \sm \F_e(\nabla f(p))} G(w)/G_{\F}(w) \, ,	
	\end{equation}
	with $B_{(l)}(p) := B_{{r^{(l)}}(\delta, p)}(p)$ we have
	\begin{equation} \label{eq:c2}
		r^{(l)}(\delta, p) + DM_l < r^{(l - 1)}(\delta, p) \, .
	\end{equation}
	Again this sequence can always be defined thanks to the continuity of multipliers. Finally, we define $r(\delta, p) = \rp{n}$. \\
	Given these definitions, when $w_k \in B_{(l)}(p) \subset B_{(0)}(p) $ and $w_k $ is not in $ \F_e(\nabla f(p))$ an in face direction is selected, because
	\begin{equation}
		\Sc{\nabla f(w_k)}{d_k^{\FW}} = \max_{a \in A} \lambda_a(w) < \min_{b \in A \sm  \F_e(\nabla f(p))} - \lambda_b(w) \leq  \max_{b \in \F(w_k) \cap A} - \lambda_b(w_k) = \Sc{\nabla f(x_k)}{d_k^{\FD}} \, ,
	\end{equation} 
	where we used \cref{eq:c1} in the first inequality, $w_k \notin \F_{e}(p)$ in the second, and \cref{eq:fdl} in the second equality. 
	We now want to prove that in this case $\alpha_k$ is maximal reasoning by contradiction. On the one hand, we have
	\begin{equation} \label{eq:algl}
		\alpha_k \geq \cC \frac{\Sc{\nabla f(x_k)}{d_k}}{\n{d_k}^2} \geq \frac{\cC}{D^2} \Sc{\nabla f(x_k)}{d_k} = \frac{\cC}{D^2} G_{\F}(w_k)
	\end{equation}
	where we used the assumption \cref{deq:alphalb} in the first inequality, $\n{d_k} \leq D$ in the second and $G_{\F}(w_k) =\Sc{\nabla f(x_k)}{d_k^{\FD}} $ together with $d_k = d_k^{\FD}$ in the last one. \\
	On the other hand,  
	\begin{equation}\label{eq:inegzk}
		\begin{aligned}
			G(w_k) = \max_{a \in A} \lambda_a(w_k) < &  \frac{\cC}{D^2} \min_{b \in A \sm  \F_e(\nabla f(p))} \lambda_b(w)^2  \leq  \frac{\cC}{D^2} \max_{b \in \F(w_k)} \lambda_b(w)^2 \\ 
			= & \frac{\cC}{D^2}(\Sc{\nabla f(w_k)}{d_k})^2 = \frac{\cC}{D^2} G_{\F}(w_k)^2
		\end{aligned}
	\end{equation}
	where we used \cref{eq:c1} in the first inequality, $w_k \notin  \F_e(\nabla f(p))$ in the second, \cref{eq:fdl} together with $d_k = d_k^{\FD}$ in the second equality, and \cref{eq:gfz} in the third equality. \\
	The inequality \cref{eq:inegzk} leads us to a contradiction with the lower bound on $\alpha_k$ given by \cref{eq:algl}, since it implies
	\begin{equation} \label{eq:alphakup}
		\alpha_k \leq \frac{G(w_k)}{G_{\F}(w_k)} < \frac{\cC}{D^2} G_{\F}(w_k) \, ,
	\end{equation}
	where we applied \cref{l:alphakineq} in the first inequality and \cref{eq:inegzk} in the second.
	\\ 
	Assume now $w_k \in B_{(n)}(p)$. We prove by induction that, for every $j \in [-1 : \tx{dim}(\F(w_k)) - 1]$, if $\{w_{k + i}\}_{0 \leq i \leq  j} \cap \F_e(\nabla f(p)) = \emptyset$  then $w_{k + j + 1} \in B_{(n - j - 1)}(p)$. 
	For $j = - 1$ we have $w_k \in B_{(n)}(p)$ by assumption. Now if $\{w_{k + i}\}_{0 \leq i \leq j} \cap \F_e(\nabla f(p)) = \emptyset$ we have
	\begin{equation}
		\begin{aligned}
			&	\n{w_{k + j + 1} - p} \leq \n{w_{k + j} - p} + \n{w_{k + j + 1} - w_{k + j}} < \rp{n - j}  +  \n{w_{k + j + 1} - w_{k + j}}  \\
			= & \rp{n - j} + \alpha_k \n{d_k}	\leq   \rp{n - j} +  D\frac{G(w_k)}{G_{\F}(w_k)}	\leq   \rp{n - j} + D M_{n - j} < 	\rp{n - j - 1} \, ,
		\end{aligned}
	\end{equation}
	where we used the inductive hypothesis $w_{k + j} \in B_{(n - j)}(p)$ in the second inequality, \cref{l:alphakineq} in the third inequality,  \cref{eq:Ml} in the fourth inequality and the assumption \cref{eq:c2} in the last one.  In particular $w_{k + j + 1} \in B_{(n - j - 1)}(p)$ and the induction is completed. \\
	Since $B_{(n - j)}(p) \subset B_{(0)}(p)$, if  $w_{k + j} \in (B_{(n - j)}(p) \sm \F_e(\nabla f(p))$ then $\alpha_{k + j}$ must be maximal and therefore $\tx{dim}(\F(w_{k + j + 1})) < \tx{dim}(\F(w_{k + j}))$. But starting from the index $k$ the dimension of the current face can decrease at most $\tx{dim}(\F(w_{k})) < n$ times in consecutive steps, so there must exists $j \in [0, \tx{dim}(\F(w_k))]$ such that $w_{k + j} \in \F_{e}(\nabla f(p))$. Taking the minimum $j$ satisfying this condition we also obtain $w_{k + j} \in B_{(0)}(p) \subset B_{\delta}(p)$.
\end{proof}
\newcommand{\rs}{r^{(s)}}
A straightforward adaptation of results from \cite{bomze2020active} implies convergence to the set of stationary points for the FDFW.
\begin{prop} \label{p:fdfwconv}		
	If \cref{deq:alphalb} and \cref{eq:deltaflb} hold, then all the limit points of the FDFW are contained in the set of stationary points of $f$.
\end{prop}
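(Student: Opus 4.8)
The plan is to combine the sufficient-increase condition \cref{eq:deltaflb} with a compactness argument and the geometry of in-face steps. First I would record that the objective sequence is monotonically non-decreasing: since $\Sc{\nabla f(w_k)}{d_k} = G_{\F}(w_k) \geq 0$ by \cref{eq:gfz}, condition \cref{eq:deltaflb} gives $f(w_{k+1}) - f(w_k) \geq \cR\, \bar{\alpha}_k G_{\F}(w_k) \geq 0$. As $f$ is continuous on the compact polytope $\Q$, the sequence $\{f(w_k)\}$ is bounded above, hence convergent, so $f(w_{k+1}) - f(w_k) \to 0$, and squeezing yields the key limit
\[ \bar{\alpha}_k\, G_{\F}(w_k) \longrightarrow 0 . \]

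Next I would argue by contradiction, assuming $w^*$ is a limit point with $G(w^*) = 2\gamma > 0$. Since $G$ is the maximum of the finitely many continuous functions $-\lambda_a$, it is continuous, so there is a radius $r>0$ with $G(w) \geq \gamma$, hence $G_{\F}(w) \geq \gamma$, for all $w \in B_r(w^*) \cap \Q$. Writing $D$ for the diameter of $\Q$ and using $\n{d_k} \leq D$, on any index $k$ with $w_k \in B_r(w^*)$ the quantity $\cC\, G_{\F}(w_k)/\n{d_k}^2$ is bounded below by $\cC\gamma/D^2 > 0$; combined with the key limit and $G_{\F}(w_k)\geq\gamma$ this forces $\bar{\alpha}_k \to 0$ along such indices. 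Consequently there is an index $K$ such that every step $k \geq K$ with $w_k \in B_r(w^*)$ satisfies $\bar{\alpha}_k < \min(1, \cC\gamma/D^2, r/(2nD))$. A FW step has $\alpha_k^{\max} = 1$ and would give $\bar{\alpha}_k \geq \min(1, \cC\gamma/D^2)$, so every such step must instead be an in-face step; and since $\bar{\alpha}_k < \cC\gamma/D^2 \leq \cC G_{\F}(w_k)/\n{d_k}^2$, the minimum defining $\bar{\alpha}_k$ is attained at $\alpha_k^{\max}$, whence $\alpha_k = \alpha_k^{\max}$, a maximal in-face step with $\n{w_{k+1}-w_k} \leq \alpha_k^{\max} D < r/(2n)$.

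The crux, which I expect to be the main obstacle, is turning these maximal in-face steps into a contradiction. Because a maximal in-face step moves to the relative boundary of $\F(w_k)$, it strictly decreases $\dim \F$. I would pick $k_0 \geq K$ with $w_{k_0} \in B_{r/2}(w^*)$ (possible since $w^*$ is a limit point) and follow the trajectory: as long as the iterate stays in $B_r(w^*)$ every step is a maximal in-face step, so the face dimension drops by at least one per step while the total displacement over the first $n$ steps stays below $r/2$, keeping the iterates inside $B_r(w^*)$. Hence within at most $\dim \F(w_{k_0}) \leq n$ steps the method reaches a vertex $w_{k_1} \in B_r(w^*)$ with $k_1 \geq K$. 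At a vertex $\F(w_{k_1}) = \{w_{k_1}\}$, so $d_{k_1}^{\FD} = 0$ and, since $G(w_{k_1}) \geq \gamma > 0$ forbids stopping, the selection rule picks the FW direction; but then $\alpha_{k_1}^{\max} = 1$ gives $\bar{\alpha}_{k_1} \geq \min(1, \cC\gamma/D^2)$, contradicting $\bar{\alpha}_{k_1} < \min(1,\cC\gamma/D^2)$ from $k_1 \geq K$. This contradiction shows $G(w^*) = 0$ for every limit point, i.e.\ every limit point is stationary. The delicate points to get right are the continuity-based choices of $r$ and $K$ and the bookkeeping ensuring the short maximal in-face steps cannot escape $B_r(w^*)$ before a vertex is reached.
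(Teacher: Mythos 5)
Your proof is correct, but it takes a genuinely different route from the paper's. The paper gives no self-contained argument for this proposition: it observes that on the simplex the FDFW coincides with the away-step Frank--Wolfe and delegates to the convergence proof of \cite{bomze2020active}, asserting that it ``extends to generic polytopes in a straightforward way.'' You instead supply a complete direct argument: condition \cref{eq:deltaflb} together with \cref{eq:gfz} and compactness of $\Q$ yields $\bar{\alpha}_k G_{\F}(w_k) \to 0$; around a putative non-stationary limit point $w^*$ the gap stays above $\gamma$, so $\bar{\alpha}_k \to 0$ along the iterates visiting $B_r(w^*)$, which by \cref{deq:alphalb} rules out FW steps there (for those $\alpha_k^{\max}=1$, hence $\bar{\alpha}_k \geq \min(1,\cC\gamma/D^2)$) and forces maximal in-face steps of displacement less than $r/(2n)$; each such step strictly drops $\dim\F(w_k)$, so within at most $n$ steps --- without leaving $B_r(w^*)$ --- the trajectory reaches a vertex, where $d_k^{\FD}=0$ and the selection rule must take a FW step, a contradiction. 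This is essentially the localization-plus-dimension-drop bookkeeping the paper itself uses for the identification result \cref{p:FDFWli} (together with \cref{l:alphakineq}), here repurposed to prove convergence. What each approach buys: the paper's one-line delegation is short and rests on a published theorem, but leaves the reader to verify that the simplex argument transfers to generic polytopes; your argument is self-contained, uses only compactness of $\Q$, finiteness of the vertex set, continuity of $\nabla f$ and the two stepsize conditions, and in effect supplies the verification the paper omits. Two details worth stating explicitly to make it airtight: the index set $\{k \ | \ w_k \in B_r(w^*)\}$ is infinite because $w^*$ is a limit point, so the threshold index $K$ is well defined; and at the vertex $w_{k_1}$ the tie-breaking rule of the algorithm necessarily selects $d_{k_1}^{\FW}$, since $d_{k_1}^{\FD}=0$ gives $\Sc{\nabla f(w_{k_1})}{d_{k_1}^{\FD}}=0$ while $\Sc{\nabla f(w_{k_1})}{d_{k_1}^{\FW}} \geq \gamma > 0$.
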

\begin{proof}
	The proof presented in the special case of the simplex in \cite{bomze2020active}, where the FDFW coincides with the away-step Frank-Wolfe, extends to generic polytopes in a straightforward way. 
\end{proof}
\newcommand{\tr}{\tilde{r}(p)}
In the next lemma we improve the FDFW local identification result given in \cref{p:FDFWli} under an additional strong concavity assumption for the face containing the solution, satisfied in particular by $h_{\G}$.
\begin{lem} \label{eq:scv} 
	Let $p$ be a stationary point for $f$ restricted to $\Q$. Assume that \cref{deq:alphalb} holds and that $f$ is strongly concave\footnote{in fact, we only need strict concavity of $f$ here.} in $\F_e(\nabla f(p))$. Then, for a neighborhood $U(p)$ of $p$, if $w_0 \in U(p)$:
	\begin{itemize}
		\item[(a)] if $\{f(w_k)\}$ is increasing, there exists $k \in [0\! :\! \tx{dim}(\F(w_0))]$ such that $w_{k + i} \in \F_e(\nabla f(p))$ for every $i \geq 0$;
		\item[(b)] if in addition also \cref{eq:deltaflb} holds, then  $\{w_{k + i}\}_{i \geq 0}$ converges to $p$. 		
	\end{itemize} 
\end{lem}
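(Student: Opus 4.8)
The plan is to leverage the two existing local-identification tools—Proposition \ref{p:FDFWli} for finite-time entry into the exposed face, and the strong concavity assumption to guarantee that once we enter, we never leave—and then to upgrade convergence using the sufficient-increase condition. First I would fix a target $\delta > 0$ (to be pinned down later) and set $U(p) = B_{r(\delta, p)}(p)$, with $r(\delta, p)$ the radius produced by Proposition \ref{p:FDFWli}(b). By that result, starting from $w_0 \in U(p)$ there is some index $k \le \dim(\F(w_0))$ with $w_k \in \F_e(\nabla f(p)) \cap B_\delta(p)$. This handles the \emph{arrival} at the exposed face; the real content of part (a) is showing the iterates stay there for all subsequent iterations.

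For the \emph{trapping} argument, the key obstacle is ruling out that the method ever steps back out of $\F_e(\nabla f(p))$ once it has entered. Here is where strong (strict) concavity of $f$ on $\F_e(\nabla f(p))$ enters. Restricted to the face $\F_e(\nabla f(p))$, the point $p$ is the unique maximizer of $f$, since $p$ is stationary and $f$ is strictly concave there. By Proposition \ref{p:FDFWli}(a), once $w_k \in B_{r^*(p)}(p) \cap \F_e(\nabla f(p))$ we have $w_{k+1} \in \F_e(\nabla f(p))$ automatically—so the method cannot select a Frank-Wolfe direction toward an atom outside the face. The remaining worry is whether the iterate could drift far within the face and then, at a later point once it is no longer inside $B_{r^*(p)}(p)$, escape. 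This is where the monotonicity hypothesis that $\{f(w_k)\}$ is increasing is decisive: restricting $f$ to the compact face and using strict concavity, the superlevel set $\{w \in \F_e(\nabla f(p)) : f(w) \ge f(w_k)\}$ shrinks toward $p$, so by choosing $\delta$ small enough that $f(w_k) \ge f(w_0)$ forces $w_{k+i}$ to remain inside $B_{r^*(p)}(p)$, the invariance of Proposition \ref{p:FDFWli}(a) can be reapplied at every subsequent step. I would make this precise by picking $\delta$ so small that the superlevel set of the face-restricted $f$ at height $\inf_{B_\delta(p) \cap \F_e} f$ is contained in $B_{r^*(p)}(p)$; then an induction on $i$ shows $w_{k+i} \in B_{r^*(p)}(p) \cap \F_e(\nabla f(p))$ for all $i \ge 0$, completing part (a).

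For part (b), with the additional condition \cref{eq:deltaflb} in force, I would invoke Proposition \ref{p:fdfwconv}: since both \cref{deq:alphalb} and \cref{eq:deltaflb} hold, all limit points of the tail sequence $\{w_{k+i}\}_{i \ge 0}$ are stationary points of $f$. But this tail lives entirely in the compact face $\F_e(\nabla f(p))$, on which $f$ is strictly concave; hence the \emph{only} stationary point available as a limit is the unique face-maximizer, namely $p$ itself. Since the tail sequence is bounded and all its limit points coincide with $p$, the whole tail converges to $p$. The one subtlety worth care is confirming that the FDFW restricted to the face is genuinely an instance of the algorithm to which Proposition \ref{p:fdfwconv} applies—this is immediate because once the iterates are confined to $\F_e(\nabla f(p))$, all FW and in-face directions are computed within that face, so the restricted process is exactly the FDFW run on the subpolytope $\F_e(\nabla f(p))$ with the restricted (still strongly concave) objective.
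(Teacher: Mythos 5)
Your proposal is correct and takes essentially the same route as the paper's own proof: entry into the face via \cref{p:FDFWli}(b), trapping via the strong-concavity superlevel-set argument combined with the one-step invariance of \cref{p:FDFWli}(a) and monotonicity of $\{f(w_k)\}$, and then part (b) via \cref{p:fdfwconv} together with uniqueness of the stationary point of $f$ on $\F_e(\nabla f(p))$. The only cosmetic difference is that the paper fixes the neighborhood explicitly as $U(p) = B_{\bar{r}}(p) \cap \mathcal{L}_{\varepsilon}$ with $\varepsilon = \mu r^*(p)^2/2$ and $\delta = r^*(p)$, whereas you encode the same level-set control through the choice of $\delta$.
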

\begin{proof}
	(a)	Let $\mu$ be the strong concavity constant of $f$ restricted to $\F_e(\nabla f(p))$, so that 
	\begin{equation}\label{eq:Ff}
		f(w) \leq f(p) - \frac{\mu}{2}\n{w - p}^2
	\end{equation}
	for every $w \in \F_e(\nabla f(p))$. For $\varepsilon = \frac{\mu r^{*}(p)^2}{2}$, let $\mathcal{L}_{\varepsilon}$ be the superlevel of $f$ for $f(p) - \varepsilon$:
	\begin{equation}
		\mathcal{L}_{\varepsilon} = \{y \in \Q \ | \ f(y) > f(p) - \varepsilon\} \, .
	\end{equation}
	Let now $\bar{r} = r(\delta, p)$ defined as in \cref{p:FDFWli}, with $\delta = r^*(p)$. By \cref{eq:Ff} it follows $\mathcal{L}_{\varepsilon} \cap \F_e(\nabla f(p)) \subset B_{r^*(p)}(p)$. Assume now $w_0 \in U(p)$ with $U(p) =B_{\bar{r}}(p) \cap \mathcal{L}_{\varepsilon}$. By applying \cref{p:FDFWli} we obtain that there exists $k \in [0\! : \!\tx{dim}(\F(w_0))]$ such that $w_{k}$ is in $ \F_e(\nabla f(p))\cap B_{r^*(p)}(p)$. But since $f(w_k) \geq f(w_0) > f(p) - \varepsilon$ we have the stronger condition $w_k \in \mathcal{L}_{\varepsilon} \cap  \F_e(\nabla f(p))$.To conclude, notice that the sequence cannot escape from this set, because for $i \geq 0$ $w_{k + i} \in \mathcal{L}_{\varepsilon}$ implies that also $w_{k + i + 1}$ is in $\mathcal{L}_{\varepsilon}$, and $w_{k + i} \in \mathcal{L}_{\varepsilon} \cap  \F_e(\nabla f(p)) \subset B_{r^*(p)}(p)\cap  \F_e(\nabla f(p)) $ implies that also $w_{k + i + 1}$ is in $ \F_e(\nabla f(p))$.    \\
	(b) By point (a) $\{w_{k + i}\}_{i \geq 0}$ is contained in $ \F_e(\nabla f(p))$.  But by assumption $f$ is strongly concave in $ \F_e(\nabla f(p))$ with $p$ global maximum and the only stationary point. To conclude it suffices to apply \cref{p:fdfwconv}.
\end{proof}
\begin{cor} \label{cor:fwglob}
	Let $\{w_k\}$ be a sequence generated by the FDFW and assume that at least one limit point $p$ is stationary and such that $f$ is strongly concave in $\F_e(\nabla f(p))$. Then under the conditions \cref{deq:alphalb} and \cref{eq:deltaflb} on the stepsize we have $w_k \rightarrow p $ with $w_k \in \F_e(\nabla f(p))$ for $k$ large enough.
\end{cor}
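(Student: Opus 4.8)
\section*{Proof proposal}

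The plan is to deduce the statement from \cref{eq:scv}(b) by applying that lemma not to the initial iterate $w_0$ but to a suitable \emph{tail} of the sequence. Two observations make this legitimate. First, condition \cref{eq:deltaflb} forces the objective values $\{f(w_k)\}$ to be monotonically nondecreasing: at a non-stationary iterate we have $\Sc{\nabla f(w_k)}{d_k} = G_{\F}(w_k) \geq G(w_k) > 0$ by \cref{eq:gfz}, and $\bar{\alpha}_k > 0$, so \cref{eq:deltaflb} gives $f(w_{k+1}) > f(w_k)$. Second, the FDFW update depends on the current iterate $w_k$ only through $\nabla f(w_k)$ and the purely geometric minimal face $\F(w_k)$; hence any tail $\{w_{k_0 + i}\}_{i \geq 0}$ is itself a valid FDFW trajectory with initial point $w_{k_0}$. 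This ``memorylessness'' is what lets us treat a late iterate $w_{k_0}$ as a fresh starting point and invoke \cref{eq:scv} there.

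Concretely, let $U(p) = B_{\bar{r}}(p) \cap \mathcal{L}_{\varepsilon}$ be the neighborhood constructed in the proof of \cref{eq:scv}, where $\mathcal{L}_{\varepsilon} = \{y \in \Q \ | \ f(y) > f(p) - \varepsilon\}$ and $\varepsilon > 0$. Since $p$ is a limit point of $\{w_k\}$, there is a subsequence $w_{k_j} \to p$, and by continuity of $f$ we have $f(w_{k_j}) \to f(p) > f(p) - \varepsilon$. Therefore, for all $j$ large enough, $w_{k_j} \in B_{\bar{r}}(p)$ and $w_{k_j} \in \mathcal{L}_{\varepsilon}$ simultaneously, i.e.\ $w_{k_j} \in U(p)$. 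Fixing such an index $k_j =: k_0$ and applying \cref{eq:scv}(b) to the FDFW trajectory started at $w_{k_0}$ — whose hypotheses \cref{deq:alphalb}, \cref{eq:deltaflb}, and strong concavity of $f$ on $\F_e(\nabla f(p))$ are precisely those assumed in the corollary — we obtain that $\{w_{k_0 + i}\}_{i \geq 0}$ converges to $p$ and lies in $\F_e(\nabla f(p))$ for every $i$ beyond some finite index. As this tail is exactly $\{w_k\}$ from index $k_0$ onward, we conclude $w_k \to p$ and $w_k \in \F_e(\nabla f(p))$ for all sufficiently large $k$.

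The one point that deserves care, and the main (if modest) obstacle, is justifying the index shift: \cref{eq:scv} is phrased for the initial iterate $w_0 \in U(p)$, so one must argue that restarting the analysis at $w_{k_0}$ does not alter its conclusion. This is exactly the memorylessness noted above, combined with the fact that $U(p)$ and the strong-concavity/stepsize hypotheses are intrinsic to $p$ and $f$ and never reference the iteration index. Verifying that $w_{k_j}$ enters the superlevel set $\mathcal{L}_{\varepsilon}$ (and not merely the ball $B_{\bar{r}}(p)$) is immediate from continuity of $f$ together with $f(p) - \varepsilon < f(p)$, so no monotonicity of the \emph{subsequence} is needed to enter $U(p)$; monotonicity of the full objective sequence, guaranteed by \cref{eq:deltaflb}, is invoked only inside \cref{eq:scv} to keep the tail trapped in $\mathcal{L}_{\varepsilon} \cap \F_e(\nabla f(p))$.
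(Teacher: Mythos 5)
Your proposal is correct and follows essentially the same route as the paper, whose one-line proof simply observes that the sequence eventually enters the neighborhood $U(p)$ from \cref{eq:scv} and then invokes that lemma. Your elaboration (monotonicity of $\{f(w_k)\}$ via \cref{eq:deltaflb}, entry of a subsequence into $U(p) = B_{\bar{r}}(p) \cap \mathcal{L}_{\varepsilon}$ by continuity, and the memorylessness of FDFW justifying the index shift) is precisely the detail the paper leaves implicit.
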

\begin{proof}
	Follows from \cref{eq:scv} by observing that the sequence must be ultimately contained in $U(p)$. 
\end{proof}
We can now prove local convergence and identification for the FDFW applied to the $s - $defective maximal clique formulation \cref{deq:P}.
\begin{prop}[FDFW local convergence] \label{p:fdfwlocal}
	Let $p = (x^{(C)}, y^{(p)}) \in \M_s(\G)$, let $\{z_k\}$ be a sequence generated by the FDFW. Then under \cref{deq:alphalb} there exists a neighborhood $U(p)$ of $p$ such that if $\bar{k} := \min \{k \in \N_{0} \ | \ z_k \in U(p) \}$ we have the following properties:
	\begin{itemize}
		\item[(a)] if $h_{\G}(z_k)$ is monotonically increasing, then $\supp(z_{k}) = C$ and $y_k = y^{(p)}$ for every $k \geq \bar{k} + \dim \F(w_{\bar{k}})$;
		\item[(b)]  if \cref{eq:deltaflb} also holds then $z_{k} \rightarrow p$.
	\end{itemize}
\end{prop}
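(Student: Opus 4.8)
The plan is to derive this proposition as an instance of the abstract identification result in \cref{eq:scv}, applied with $f = h_{\G}$ and $\Q = \p$. Since the FDFW is memoryless, the tail $\{z_{\bar k + i}\}_{i \ge 0}$ is itself an FDFW sequence started at $z_{\bar k}$, and by definition of $\bar k$ we have $z_{\bar k} \in U(p)$ (if no iterate ever enters $U(p)$, then $\bar k = +\infty$ and there is nothing to prove). Hence it suffices to take $U(p)$ to be the neighborhood provided by \cref{eq:scv} and apply that lemma to the shifted sequence. Two hypotheses of \cref{eq:scv} must be checked: that $p$ is stationary for $h_{\G}$ on $\p$ — immediate, since $p \in \M_s(\G)$ is a strict local maximizer by \cref{p:lm} — and that $h_{\G}$ is strongly concave on the exposed face $\F_e(\nabla h_{\G}(p))$.

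First I would pin down the exposed face. By the strict complementarity established in \cref{l:sc}, $\F_e(\nabla h_{\G}(p)) = \Delta_{n-1}^{(C)} \times \{y^{(p)}\}$. On this face the $y$-block is frozen at $y^{(p)}$, so $h_{\G}$ restricts to $x \mapsto h_{\G}(x, y^{(p)})$ on $\Delta^{(C)}$, which is quadratic in $x$ with constant Hessian block $H_C = 2 e e^{\intercal} + (\alpha - 2)\mathbb{I}$ on the indices of $C$ (exactly the matrix computed in the proof of \cref{p:lm}, using that $C$ is a clique of $\G(y^{(p)})$). On the tangent directions of the face, i.e.\ those $d$ with $e^{\intercal} d_C = 0$, this gives $d^{\intercal} H_C d = (\alpha - 2)\n{d_C}^2$, which since $\alpha \in (0,2)$ is negative definite with modulus $2 - \alpha > 0$; being a constant quadratic form, strong concavity holds on the whole face, as required.

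With both hypotheses verified, \cref{eq:scv} applies. For part (a), assuming $h_{\G}(z_k)$ is monotonically increasing, \cref{eq:scv}(a) yields an index $k' \in [0\!:\!\dim \F(z_{\bar k})]$ with $z_{\bar k + k' + i} \in \Delta^{(C)} \times \{y^{(p)}\}$ for all $i \ge 0$; hence for every $k \ge \bar k + \dim \F(z_{\bar k}) \ge \bar k + k'$ we obtain $y_k = y^{(p)}$ and $\supp(x_k) \subseteq C$. To upgrade this to $\supp(z_k) = C$, I would use that the proof of \cref{eq:scv} in fact confines these iterates to $B_{r^*(p)}(p)$; choosing the radius $r^*(p) < 1/|C|$ — compatible with the smallness already demanded — forces $(x_k)_i > 1/|C| - \n{x_k - x^{(C)}} > 0$ for every $i \in C$, so the support is exactly $C$. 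Part (b) is then immediate: under the additional sufficient-increase condition \cref{eq:deltaflb}, \cref{eq:scv}(b) gives $z_{\bar k + i} \to p$, i.e.\ $z_k \to p$.

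The computations themselves (the Hessian block and its tangent-space restriction) are already in hand from \cref{p:lm}, so the only genuinely delicate points are bookkeeping: confirming that the tail sequence inherits the starting hypothesis $w_0 \in U(p)$ of \cref{eq:scv}, and choosing $U(p)$ small enough that membership in the exposed face near $p$ pins down the \emph{exact} support $C$ rather than a mere subset. Neither is a serious obstacle — the former is just the memoryless property of the FDFW, and the latter follows from the strictly positive minimal entry $1/|C|$ of the barycenter $x^{(C)}$.
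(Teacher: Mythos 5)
Your proposal is correct and follows essentially the same route as the paper: identify the exposed face via \cref{l:sc}, verify that $h_{\G}$ is strongly concave there (the paper does this by the explicit restriction $h_{\G}(x,y^{(p)}) = 1-(1-\tfrac{\alpha}{2})\sum_{i\in C}x_i^2 + \tfrac{\beta}{2}\n{y^{(p)}}^2$ on $\Delta^{(C)}$ rather than through the Hessian block $2ee^{\intercal}+(\alpha-2)\mathbb{I}$, but this is the same computation), and then apply \cref{eq:scv}. If anything you are more careful than the paper on one point: face membership alone only gives $\supp(x_k)\subseteq C$, and your extra step of shrinking $r^*(p)$ below $1/|C|$ so that every coordinate in $C$ stays positive is precisely what is needed to justify the stated equality $\supp(z_k)=C$, which the paper's proof leaves implicit.
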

\begin{proof}
	Let $A(p) = A_{\G} + A(y^{(p)})$. Then for $x \in \Delta^{(C)}$
	\begin{equation} \label{eq:cdelta}
		\begin{aligned}
			& x^{\intercal}A(p)x = \sum_{(i, j) \in V^2} x_i A(p)_{ij} x_j = \sum_{i \in C} x_i (\sum_{j \in C} A(p)_{ij} x_j)  =  \sum_{i \in C} x_i (\sum_{j \in C \sm \{i\} } x_j) \\
			= & \sum_{i \in C} (x_i \sum_{j \in C} x_j - x_i^2) =  (\sum_{i \in C} x_i)^2 - \sum_{i \in C} x_i^2 \, ,
		\end{aligned}
	\end{equation}
	where in the first equality we used $\tx{supp}(x) = C$, in the second that $C$ is a clique n $G(y^{(p)})$, and $\sum_{i \in C}x_i = \sum_{i \in V} x_i = 1$. \\
	Observe now that the function $x \mapsto h_{\G}(x, y^{(p)})$ is strongly concave in $\Delta^{(C)}$. Indeed for $x \in \Delta^{(C)}$ 
	\begin{equation} \label{eq:sc}
		\begin{aligned}
			h_{\G}(x, y^{(p)}) = x^{\intercal}A(p)x + \frac{\alpha}{2}\n{x}^2 + \frac{\beta}{2}\n{y^{(p)}}^2  = & (\sum_{i \in C} x_i)^2 - \sum_{i \in C} x_i^2 +  \frac{\alpha}{2}\n{x}^2 + \frac{\beta}{2}\n{y^{(p)}}^2 \\
			= & 1 - (1 - \frac{\alpha}{2})\sum_{i \in C} x_i^2 +  \frac{\beta}{2}\n{y^{(p)}}^2 \, ,	
		\end{aligned}
	\end{equation}
	where in the second equality we used \cref{eq:cdelta}. The RHS of \cref{eq:sc} is strongly concave in $x$ since $\alpha \in (0, 2)$ so that $- (1 - \alpha/2) \in (- 1, 0)$. This together with \cref{l:sc} gives us the necessary assumptions to apply \cref{eq:scv}. 
\end{proof}
As a corollary, we have the following global convergence result under the mild assumption that the set of limit points contains one local minimizer.
\begin{cor}[FDFW global convergence]
	Let $\{z_k\}$ be a sequence generated by the FDFW and assume that at least one limit point $p = (x^{(C)}, y^{(p)})$ of $\{z_k\}$ is in $\M_s(\G)$. Then under the conditions \cref{deq:alphalb} and \cref{eq:deltaflb} on the stepsize we have $z_k \rightarrow p $ with $\supp(x_k) \subset C$ and $y_k = y_p$ for $k$ large enough.
\end{cor}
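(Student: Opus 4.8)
The plan is to deduce the statement from the generic global convergence result \cref{cor:fwglob} by checking its two hypotheses for the objective $h_{\G}$ on the polytope $\p$, and then to translate the resulting eventual face-membership into the claimed support identification. Since $h_{\G}$ is a polynomial (hence smooth) and $\p$ is a polytope, the FDFW framework of \cref{Sec:FDFW} applies verbatim with $f = h_{\G}$ and $\Q = \p$.

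First I would verify that the distinguished limit point $p = (x^{(C)}, y^{(p)}) \in \M_s(\G)$ is stationary for $h_{\G}$: this is immediate, since by \cref{p:lm} the elements of $\M_s(\G)$ are strict local maximizers and therefore satisfy the first-order stationarity conditions. Next I would check strong concavity of $h_{\G}$ on the exposed face $\F_e(\nabla h_{\G}(p))$. By the strict-complementarity \cref{l:sc} this face is exactly $\Delta_{n-1}^{(C)} \times \{y^{(p)}\}$, so restricting $h_{\G}$ to it fixes $y = y^{(p)}$ and lets $x$ range over $\Delta^{(C)}$; the computation \cref{eq:sc} in the proof of \cref{p:fdfwlocal} shows that on $\Delta^{(C)}$ the map $x \mapsto h_{\G}(x, y^{(p)})$ reduces to $1 - (1 - \tfrac{\alpha}{2})\sum_{i \in C} x_i^2 + \tfrac{\beta}{2}\n{y^{(p)}}^2$, which is strongly concave precisely because $\alpha \in (0,2)$. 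These are the two structural assumptions demanded by \cref{cor:fwglob}.

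With stationarity and face strong concavity in hand, and with the stepsize conditions \cref{deq:alphalb} and \cref{eq:deltaflb} assumed, \cref{cor:fwglob} applies directly and yields $z_k \to p$ together with $z_k \in \F_e(\nabla h_{\G}(p))$ for all $k$ large enough. Unpacking the face via \cref{l:sc} finishes the argument: writing $z_k = (x_k, y_k)$, membership in $\Delta_{n-1}^{(C)} \times \{y^{(p)}\}$ forces $x_k \in \Delta^{(C)}$, i.e. $\supp(x_k) \subseteq C$, and $y_k = y^{(p)}$. I do not expect a genuine obstacle, as every analytic ingredient is already available; the only point meriting care is the upgrade from ``$p$ is a limit point'' to ``the tail of $\{z_k\}$ is eventually trapped near $p$,'' but this is exactly what is handled inside the proof of \cref{cor:fwglob} (via \cref{eq:scv}), where the monotone increase of $h_{\G}(z_k)$ guaranteed by \cref{eq:deltaflb} confines the sequence to the superlevel neighborhood $U(p)$ once it enters.
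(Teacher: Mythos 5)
Your proposal is correct and follows essentially the same route as the paper: the paper's proof simply invokes \cref{cor:fwglob}, noting that its hypotheses (stationarity of $p$ and strong concavity of $h_{\G}$ on $\F_e(\nabla h_{\G}(p))$) are verified exactly as in \cref{p:fdfwlocal}, i.e.\ via the strict-complementarity identity of \cref{l:sc} and the computation \cref{eq:sc}. Your write-up just makes explicit what the paper leaves implicit, including the final unpacking of $\F_e(\nabla h_{\G}(p)) = \Delta_{n-1}^{(C)} \times \{y^{(p)}\}$ into the support and $y$-component claims.
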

\begin{proof}
	Follows from \cref{cor:fwglob} where all the necessary assumptions are satisfied as for \cref{p:fdfwlocal}.
\end{proof}

\begin{rem}
	When the sequence converges to a first order stationary point $z^* = (x^*, y^*)$ which is not a local maximizer, one can use the procedure described in \cite{stozhkovcontinuous} to obtain an $s$-defective clique $C$ and $y \in D_s(\G)$ with $h_{\G}((x^{(C)}, y)) > h(z^*)$. The cost of the procedure is $O(|\tx{supp}(x^*)|^2)$.
\end{rem}
\section{A Frank-Wolfe variant for $s$-defective clique}\label{Sec:FWvar}
As can be seen from numerical results, one drawback of the standard FDFW applied to the $s$-defective clique  formulation \cref{deq:P} is the slow convergence of the high dimensional $y$ component. Since this component is "tied" to the $x$ component, it is not possible to speed up the convergence by changing the regularization term without compromising the quality of the solution.  Motivated by this challenge, we introduce a tailored Frank-Wolfe variant, namely FWdc,  for the maximum $s$-defective clique formulation \cref{deq:P}, which exploits the product domain structure of the problem at hand by employing separate updating rules for the two blocks.   \\
\setcounter{ALC@unique}{0}	
\begin{algorithm}
	\caption{Frank-Wolfe variant for $s$-defective clique}
	\label{alg:BCFW}
	\begin{algorithmic}[1]
		\STATE{\textbf{Initialize} $z_0:=(x_0, y_0) \in \p$, $k := 0$}
		\IF{$z_k$ is stationary}
		\STATE{STOP}
		\ENDIF
		\STATE{Compute $x_{k + 1}$ applying one iterate of \cref{alg:FW} with $w_0 = x_k$ and $f(w)=  h_{\G}(w, y_k)$.} \label{st:xk1}
		\STATE{Let $y_{k+1} \in \argmax_{y \in D'_s(\G)} \Sc{\nabla_y h_{\G}(x_k, y_k)}{y} $.} \label{st:yk1}
		\STATE{Set $k : = k+1$. Go to step 2.}
	\end{algorithmic}
\end{algorithm}
In particular, at every iteration the method alternates a FDFW step on the $x$ variables (\cref{st:xk1}) with a full FW step on the $y$ variable (\cref{st:yk1}), so that $y_k$ is always chosen in the set of vertices $D_s(\G)$ of $D'_s(\G)$. Furthermore, as we prove in the next proposition, $\{y_k\}$ is ultimately constant. This allows us to obtain convergence results by applying the general properties of the FDFW proved in the previous section to the $x$ component. 
\begin{prop} \label{p:algo2}
	In the FWdc if $h_{\G}(z_k)$ is increasing then $\{y_k\}$ can change at most $\frac{2}{\beta} - \frac{2 - \alpha}{\beta |C^*|} + s$ times, with $C^*$ $s$-defective clique of maximal cardinality.  
\end{prop}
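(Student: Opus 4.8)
The plan is to show that every time the block variable $y$ changes, the objective $\hg(z_k)$ must rise by at least $\beta/2$, and then to divide the total available increase by this quantum. By \cref{c:globalm} together with \cref{loval}, the global maximum of $\hg$ equals $h^{*} := 1 - \frac{2-\alpha}{2|C^{*}|} + s\frac{\beta}{2}$, and since $\hg > 0$ on $\p$, the cumulative increase of a monotone sequence $\{\hg(z_k)\}$ is strictly less than $h^{*}$. If there are $m$ such $y$-changes, each worth at least $\beta/2$, then $m\,\frac{\beta}{2} < h^{*}$, i.e. $m < \frac{2h^{*}}{\beta} = \frac{2}{\beta} - \frac{2-\alpha}{\beta|C^{*}|} + s$, which is exactly the claimed threshold (note that the stated bound is precisely $2h^{*}/\beta$).

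The heart of the matter is the per-change increment. First I would observe that for fixed $x$ the map $y \mapsto \hg(x,y)$ is $\beta$-strongly convex: the only term of $\hg$ that is quadratic in $y$ is $\frac{\beta}{2}\n{y}^2$, while $x^{\intercal}A(y)x$ is linear in $y$, so the $y$-Hessian is $\beta\,\mathbb{I}$. The update at \cref{st:yk1} is a full Frank--Wolfe (linear maximization) step over $\ds$, so $y_{k+1}$ is a vertex, i.e. $y_{k+1}\in D_s(\G)$, and it maximizes the linearization of $\hg$ in $y$ at the current point; since the current $y$ is feasible, $\langle \nabla_y \hg, y_{k+1}-y_k\rangle \geq 0$. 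An exact second-order Taylor expansion in $y$ (the function being quadratic in $y$) then gives $\hg(x,y_{k+1}) - \hg(x,y_k) \geq \frac{\beta}{2}\n{y_{k+1}-y_k}^2$. When $y_{k+1}\neq y_k$ are distinct $\{0,1\}$ vectors (which holds for $k\geq 1$, where both $y_k,y_{k+1}\in D_s(\G)$), $\n{y_{k+1}-y_k}^2 \geq 1$, so this block increment is at least $\beta/2$. To turn this into a genuine increase of the full iterate objective I would combine it with the ascent property of the $x$-block update at \cref{st:xk1}: one FDFW iterate on $f(\cdot)=\hg(\cdot,y_k)$ does not decrease that objective, so $\hg(x_{k+1},y_k)\geq \hg(x_k,y_k)$. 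Composing the two monotone moves should yield $\hg(z_{k+1})-\hg(z_k)\geq \beta/2$ at every $y$-change, while all other steps increase $\hg$ by the monotonicity hypothesis; summing and invoking the global bound $h^{*}$ then closes the argument.

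The main obstacle is exactly the interaction between the two blocks: the linearization defining $y_{k+1}$ is taken at $(x_k,y_k)$, whereas the realized increment I want to control is $\hg(x_{k+1},y_{k+1}) - \hg(x_k,y_k)$, so the strong-convexity lower bound and the $x$-ascent bound are anchored at different base points. The delicate step is to certify that the $x$-update does not erode the $\beta/2$ gained by the $y$-update, i.e. to pass from the fixed-$x$ inequality $\hg(x_k,y_{k+1})-\hg(x_k,y_k)\geq \beta/2$ to the true increment along $\{z_k\}$. I would handle this either by anchoring the $y$-step gradient at the post-$x$-update point, so that the two ascents compose cleanly at $x_{k+1}$, or by controlling the cross term $x^{\intercal}[A(y_{k+1})-A(y_k)]x$ across the $x$-step; the monotonicity assumption on $\hg(z_k)$ is what ultimately guarantees no gained increment is wasted. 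A minor separate point is the very first change $y_0\to y_1$, where $y_0$ need not be a vertex and $\n{y_1-y_0}^2$ may be small; this perturbs the count by at most a constant and is absorbed by the slack in the bound.
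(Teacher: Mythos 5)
Your proposal is, in substance, the paper's own proof: the same $\beta/2$ quantum per change (exact second-order expansion of the $\beta$-strongly convex map $y \mapsto \hg(x,y)$, nonnegativity of the linear term by optimality of the FW vertex $y_{k+1}$, squared distance at least $1$ between distinct vertices of $D_s'(\G)$), followed by the same budget count: monotonicity plus $\hg > 0$ and \cref{c:globalm} bound the cumulative increase by $1 - \frac{2-\alpha}{2|C^*|} + s\frac{\beta}{2}$, hence the number of changes by $\frac{2}{\beta} - \frac{2-\alpha}{\beta|C^*|} + s$. What you call the main obstacle --- that the $y$-step is linearized at $(x_k,y_k)$ while the realized increment runs to $(x_{k+1},y_{k+1})$ --- is not resolved by any additional idea in the paper: its proof simply invokes ``$x_k = x_{k+1}$'' in the displayed chain, i.e.\ it measures the increment with the $x$-block frozen, which is exactly your fixed-$x$ inequality $\hg(x_k,y_{k+1}) - \hg(x_k,y_k) \geq \beta/2$. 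So you have reproduced everything the paper actually establishes, and your worry about the cross term $x^{\intercal}[A(y_{k+1})-A(y_k)]x$ identifies a real looseness in the paper's argument for \cref{alg:BCFW} as listed (when both blocks move in one iteration, monotonicity of $\hg(z_k)$ only makes the combined increment nonnegative, not $\geq \beta/2$); your first proposed repair --- anchoring the $y$-linearization at $x_{k+1}$, or equivalently treating the two block updates as separate iterates so that $x$ really is frozen when $y$ moves --- is the natural way to close this gap, and nothing deeper is hiding in the paper. Your final remark about a possibly non-vertex $y_0$ concerns a case the paper's proof also silently skips (it only treats changes between vertices); since \cref{st:yk1} forces $y_k \in D_s(\G)$ for all $k \geq 1$, it affects the count by at most one.
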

\begin{proof}
	Assume that $y_k$ and $y_{k + 1}$ are distinct vertices of $D_s'(\G)$. Then 
	\begin{equation}
		\begin{aligned}
			&	h_{\G}(z_{k + 1}) - h_{\G}(z_k) \geq \Sc{\nabla h_{\G}(z_k)}{(z_{k + 1} - z_k)} + \frac{\beta}{2}\n{z_{k + 1} - z_k}^2 \\  = &\Sc{\nabla_y h_{\G}(z_k)}{(y_{k + 1} - y_k)} + \frac{\beta}{2} \n{y_{k + 1} - y_k}^2 \geq \frac{\beta}{2} > 0 \,  	
		\end{aligned}
	\end{equation}
	where we used the $\beta - $strong convexity of $y \mapsto h_{\G}(x , y)$ in the first inequality, $x_k = x_{k + 1}$ in the equality, $y_{k + 1} \in \argmax_{y \in \p} \Sc{\nabla_y h_{\G}(z_k)}{y}$ and the fact that the distance between vertices of $D_s'(\G)$ is at least 1 in the second inequality. \\
	Therefore $y_k$ can change at most
	\begin{equation*}
		\max_{z \in \p} \frac{2(h_{\G}(z) - h_{\G}(z_0))}{\beta} \leq \max_{z \in \p} \frac{2h_{\G}(z)}{\beta} = \frac{1 - 1/|C^*| + \alpha/2|C^*| + s\beta/2 }{\beta/2} = \frac{2}{\beta} + \frac{\alpha - 2}{\beta|C^*|} + s
	\end{equation*}
	times, where we used $h_{\G} \geq 0$ in the first inequality, and \cref{c:globalm} in the second inequality. 
\end{proof}
\begin{cor} \label{cor:algo2conv}
	Let $\{z_k\}$ be a sequence generated by Algorithm 2. 
	\begin{itemize}
		\item[1.]  If conditions \cref{deq:alphalb} and \cref{eq:deltaflb} hold on the stepsizes, then $\{z_k\}$ converges to the set of stationary points.
		\item[2. ]If the stepsize is given by exact line search or Armijo line search and the set of limit points of $\{z_k\}$ is finite, then $z_k \rightarrow p$ with $p$ stationary. 
	\end{itemize} 
\end{cor}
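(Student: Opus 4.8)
The plan is to reduce the convergence analysis of the block-coordinate scheme FWdc to the single-block FDFW theory of \cref{p:fdfwconv}, using the fact that the $y$-component freezes after finitely many iterations. First I would check that $\{\hg(z_k)\}$ is non-decreasing under \cref{deq:alphalb} and \cref{eq:deltaflb}. The $x$-update of \cref{st:xk1} is one FDFW iterate for $w \mapsto \hg(w, y_k)$, so \cref{eq:deltaflb} yields $\hg(x_{k+1}, y_k) \ge \hg(x_k, y_k)$, while the $y$-update of \cref{st:yk1} is a full Frank--Wolfe step for the $\beta$-strongly convex block $y \mapsto \hg(x_{k+1}, y)$ and hence cannot decrease it; adding the two inequalities gives $\hg(z_{k+1}) \ge \hg(z_k)$. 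With monotonicity in hand, \cref{p:algo2} guarantees that $\{y_k\}$ changes only finitely often, so there are $K \in \N$ and a vertex $y^* \in D_s(\G)$ with $y_k = y^*$ for all $k \ge K$.

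For $k \ge K$ the objective used in \cref{st:xk1} is the fixed function $w \mapsto \hg(w, y^*)$, so the tail $\{x_k\}_{k \ge K}$ is exactly an FDFW sequence for this objective on $\Delta_{n-1}$; by \cref{p:fdfwconv} every limit point $x^*$ of $\{x_k\}$ then satisfies $\langle \nabla_x \hg(x^*, y^*), x - x^* \rangle \le 0$ for all $x \in \Delta_{n-1}$. I would lift this to joint stationarity using the product structure $\p = \Delta_{n-1} \times \ds$: since $y_{k+1} = y_k = y^*$ for $k \ge K$, the defining property of \cref{st:yk1} reads $\langle \nabla_y \hg(x_k, y^*), y - y^* \rangle \le 0$ for all $y \in \ds$, and passing to a subsequence with $x_k \to x^*$ and invoking continuity of the gradient gives the same inequality at $(x^*, y^*)$. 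Any feasible direction decomposes as $z - z^* = (x - x^*, y - y^*)$ with $x \in \Delta_{n-1}$ and $y \in \ds$, whence $\langle \nabla \hg(z^*), z - z^* \rangle = \langle \nabla_x \hg, x - x^* \rangle + \langle \nabla_y \hg, y - y^* \rangle \le 0$, so $z^* = (x^*, y^*)$ is stationary. As every limit point of $\{z_k\}$ has this form, Part 1 follows.

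For Part 2 I would recall (see \cref{alphacond}) that both exact and Armijo line search satisfy \cref{deq:alphalb} and \cref{eq:deltaflb}, so Part 1 already yields that all limit points of $\{z_k\}$ are stationary; it remains to upgrade this to convergence of the whole sequence. Telescoping the bounded monotone increase $\sum_k [\hg(z_{k+1}) - \hg(z_k)] \le \max_{z \in \p} \hg(z) - \hg(z_0) < +\infty$ together with \cref{eq:deltaflb} forces the $x$-step lengths to vanish by the standard Frank--Wolfe analysis, while $\{y_k\}$ is eventually constant, so $\|z_{k+1} - z_k\| \to 0$. A bounded sequence with vanishing consecutive differences has a connected limit set, and a finite connected set is a singleton; hence $\{z_k\}$ converges to a single stationary point $p$.

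The step I expect to be most delicate is the monotonicity of $\{\hg(z_k)\}$ for the alternating scheme: the two blocks are coupled through the cubic term, so one must argue carefully that the $y$-step does not cancel the gain of the $x$-step, and this is precisely what unlocks \cref{p:algo2}. The lifting from block-wise to joint stationarity is short thanks to the product domain, and the single-limit-point argument of Part 2 is routine once $\|z_{k+1} - z_k\| \to 0$ has been secured.
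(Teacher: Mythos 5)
Your overall strategy coincides with the paper's: invoke \cref{p:algo2} to freeze $\{y_k\}$ after finitely many iterations, view the tail of $\{x_k\}$ as an FDFW sequence (which on the simplex is the AFW) for the fixed quadratic $x\mapsto \hg(x,y^*)$, and apply \cref{p:fdfwconv}; your explicit lifting of block-wise stationarity to joint stationarity through the optimality of \cref{st:yk1} is a correct detail that the paper leaves implicit. The first genuine gap is in your monotonicity argument. In \cref{st:yk1} the linear oracle is applied to $\nabla_y \hg(x_k,y_k)$, \emph{not} to $\nabla_y \hg(x_{k+1},y_k)$, so what convexity actually gives is $\hg(x_k,y_{k+1})\ge \hg(x_k,y_k)$, while \cref{eq:deltaflb} gives $\hg(x_{k+1},y_k)\ge \hg(x_k,y_k)$. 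These are increments taken at two different base points and cannot simply be ``added'' to conclude $\hg(z_{k+1})\ge \hg(z_k)$: the coupling term $2\sum_{\{i,j\}\in\bE}x_ix_j\,(y_{k+1}-y_k)_{ij}$ can be negative at $x_{k+1}$ even when it is nonnegative at $x_k$. Note that the paper does not prove this monotonicity either: it enters \cref{p:algo2} as a hypothesis, and the proof of \cref{p:algo2} silently assumes $x_{k+1}=x_k$ whenever $y_{k+1}\neq y_k$; so this point requires either that sequential reading of \cref{alg:BCFW} or a genuinely different argument, not the one you give.

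The second and more serious gap is in Part 2, where you declare $\n{z_{k+1}-z_k}\to 0$ to be ``routine''. Conditions \cref{deq:alphalb} and \cref{eq:deltaflb} are one-sided: telescoping the increase gives $\bar\alpha_k \Sc{\nabla f(w_k)}{d_k}\to 0$, but this bounds neither $\alpha_k$ nor $\n{d_k}$ from above. FW and in-face steps under exact line search can be maximal --- on a quadratic that is convex along $d_k$ the exact step is $\alpha_k^{\max}$, producing vertex-to-vertex jumps and drop steps whose length need not shrink as the gap vanishes. Ruling out such behavior near a finite limit set is precisely the content of the external result the paper cites for point 2, namely \cite[Theorem 4.5]{bomze2020active} (see also the remark after \cref{eq:ls} that exact line search must select the \emph{largest} maximizer for this result to apply). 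Your Ostrowski-type ending (vanishing consecutive differences plus a finite, hence connected and so singleton, limit set) is fine, but its premise is the hard part of that theorem, not a standard consequence of the stepsize conditions; as written, Part 2 assumes what must be proved.
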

\begin{proof}
	As a corollary of \cref{p:algo2}, an application of Algorithm 2 reduces, after a finite number of changes for the variable $y$, to an application of the FDFW on the simplex for the optimization of a quadratic objective. After noticing that on the simplex the FDFW coincides with the AFW, point 1 follows directly from \cref{p:fdfwconv}, and point 2 follows from \cite[Theorem 4.5]{bomze2020active}. 
\end{proof}
For a clique $C$ of $\G(y)$ different from $\G$ we define $m(C, \G(y))$ as 
\begin{equation}
	\min_{v \in V \sm C} |C| - |E^y(v) \cap C| \, ,
\end{equation}
that is the minimum number of edges needed to increase by 1 the size of the clique. \\
We now give an explicit bound on how close the sequence $\{x_k\}$ generated by \cref{alg:BCFW} must be to $x^{(C)}$ for the identification to happen.
\newcommand{\lmax}{\delta_{\max}}
\begin{prop}
	Let $\{z_k\}$ be a sequence generated by  \cref{alg:BCFW}, $\bar{y} \in D^s(\G) $, $C$ be a clique in $\G(\bar{y})$,  let $\lmax$ the maximum eigenvalue of the adjacency matrix $\bar{A} : = A_{\G} + A(\bar{y})$.
	Let $\bar{k}$ be a fixed index in $\N_0$, $I^c$ the components of $\tx{supp}(x_{\bar{k}})$ with index not in $C$ and let $L := 2 \delta_{\max} + \alpha$. Assume that $y_{\bar{k} + j} = \bar{y} $ is constant for $0 \leq j \leq |I^c|$, that \cref{deq:alphalb} holds for $\cC = 1/L$, and that 
	\begin{equation} \label{eq:sdc}
		\n{x_{\bar{k}} - x^{(C)}}_1 \leq  \frac{m_{\alpha}(C, \G(y_{\bar{k}}))}{m_{\alpha}(C,  \G(y_{\bar{k}})) + 2|C|\lmax + |C|\alpha }
	\end{equation}
	for $m_{\alpha}(C,  \G(y_{\bar{k}})) = m(C,  \G(y_{\bar{k}})) - 1 + \alpha/2$.
	Then $\tx{supp}(x_{\bar{k} + |I^c|})  = C$.
\end{prop}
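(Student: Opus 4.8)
The plan is to exploit that, over the iterations $\bar k,\dots,\bar k+|I^c|$, the block $y$ is frozen at $\bar y$, so by construction \cref{st:xk1} of \cref{alg:BCFW} reduces to the FDFW run on the simplex $\Delta_{n-1}$ for the quadratic objective $f(x):=\hg(x,\bar y)=x^{\intercal}\bar A x+\frac{\alpha}{2}\n{x}^2+\text{const}$, where on the simplex the FDFW coincides with the away-step Frank--Wolfe. Its Hessian $2\bar A+\alpha\mathbb I$ is constant with largest eigenvalue $L=2\delta_{\max}+\alpha$ (since $\delta_{\min}\ge-\delta_{\max}$ for an adjacency matrix forces $\n{2\bar A+\alpha\mathbb I}_2=L$), which is exactly the curvature constant attached to \cref{deq:alphalb} through $\cC=1/L$. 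A first observation is that \cref{eq:sdc} can only be met when $m_\alpha>0$, i.e.\ $m:=m(C,\G(\bar y))\ge1$: this is precisely maximality of $C$ in $\G(\bar y)$, so that $x^{(C)}$ is a stationary point of $f$ on $\Delta_{n-1}$ (a local maximizer of the regularized Motzkin--Straus objective for $\G(\bar y)$, cf.\ the proof of \cref{p:lm}); the same bound forces $(x_{\bar k})_i>0$ for all $i\in C$, hence $C\subseteq\supp(x_{\bar k})$.

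Next I would record the gradient gap at $x^{(C)}$. As in \cref{p:lm}, $\nabla f(x^{(C)})_i=2-\frac{2-\alpha}{|C|}$ for $i\in C$, while for $i\notin C$ one has $\nabla f(x^{(C)})_i=\frac{2}{|C|}|E^{\bar y}(i)\cap C|\le 2-\frac{2m}{|C|}$ by definition of $m$. Hence the $C$-vertices dominate the others by at least $\frac{2m-2+\alpha}{|C|}=\frac{2m_\alpha}{|C|}$. Since $f$ is quadratic, $\nabla f$ is affine and $\n{\nabla f(x)-\nabla f(x^{(C)})}_\infty\le L\n{x-x^{(C)}}_1$, so this strict ordering of the gradient entries, together with the vanishing of the Frank--Wolfe gap at the stationary $x^{(C)}$, survives in a neighborhood controlled by \cref{eq:sdc}.

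With this in hand the core is an induction over the iterations in which bad vertices (those in $\supp(x_\ell)\setminus C$) still remain. For each such $\ell$ I would show three things, paralleling the argument of \cref{p:FDFWli}: (i) the in-face vertex $v_\ell$, i.e.\ the support vertex minimizing $\nabla f(x_\ell)$, is a bad vertex, by the gradient ordering above; (ii) the FDFW selects the in-face direction, because near the stationary $x^{(C)}$ the away gap $\Sc{\nabla f(x_\ell)}{d_\ell^{\FD}}$ dominates the Frank--Wolfe gap; and (iii) the step is maximal, $\alpha_\ell=\alpha_\ell^{\max}$, so the targeted bad vertex leaves the support. Point (iii) is where \cref{eq:sdc} enters quantitatively: by \cref{deq:alphalb} with $\cC=1/L$ it suffices that $\frac1L\,\Sc{\nabla f(x_\ell)}{d_\ell^{\FD}}/\n{d_\ell^{\FD}}^2\ge\alpha_\ell^{\max}$; bounding $\alpha_\ell^{\max}=\frac{(x_\ell)_{v_\ell}}{1-(x_\ell)_{v_\ell}}\le\frac{\eta}{1-\eta}$, where $\eta:=\n{x_{\bar k}-x^{(C)}}_1$ and the total weight on bad vertices is at most $\eta$ and (as one checks) non-increasing along maximal away steps, this reduces to the structural estimate $\Sc{\nabla f(x_\ell)}{d_\ell^{\FD}}\ge\frac{m_\alpha}{|C|}\n{d_\ell^{\FD}}^2$, and \cref{eq:sdc} is exactly the rearrangement $\frac{\eta}{1-\eta}\,|C|L\le m_\alpha$ that makes the two bounds fit together.

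To keep the induction running with the single a-priori radius \cref{eq:sdc}, I would use that a maximal away step from a bad vertex rescales all surviving coordinates by a common factor $>1$: the ratios among the $C$-weights are thus preserved and the total bad weight only decreases, so the balance of the $C$-block, and with it the gradient ordering of (i)--(ii), is maintained throughout the whole window. Since each of the $|I^c|$ iterations removes exactly one bad vertex while no $C$-vertex is ever dropped (their weights only grow), after $|I^c|$ steps no bad vertex is left and $\supp(x_{\bar k+|I^c|})=C$. The \emph{main obstacle} is clearly point (iii): turning the schematic gap $\frac{2m_\alpha}{|C|}$ into the sharp lower bound $\Sc{\nabla f(x_\ell)}{d_\ell^{\FD}}\ge\frac{m_\alpha}{|C|}\n{d_\ell^{\FD}}^2$ requires carefully controlling the away gap through the graph quantity $m$ and the spectral bound $\delta_{\max}$ while the iterate drifts, and verifying that the constants close to give precisely the threshold in \cref{eq:sdc} rather than a worse one.
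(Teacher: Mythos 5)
Your overall strategy coincides with the paper's opening move and uses the same two quantitative ingredients: you freeze $y=\bar y$ so that \cref{st:xk1} becomes the away-step Frank--Wolfe on $\Delta_{n-1}$ for the quadratic $x\mapsto \hg(x,\bar y)$, you take $L=2\delta_{\max}+\alpha$ as the Lipschitz constant of its gradient, and your ``gradient gap'' $2m_{\alpha}(C,\G(\bar y))/|C|$ is exactly the paper's lower bound on the smallest off-support multiplier $\lambda_{\min}=\min_{i\in V\sm C}-\lambda_i(x^{(C)})$. The divergence is in how the argument is closed. The paper does not re-derive the identification mechanics at all: it invokes the quantitative identification theorem for the AFW, \cite[Theorem 3.3]{bomze2020active}, which states that $\n{x_{\bar k}-x^{(C)}}_1<\lambda_{\min}/(\lambda_{\min}+2L')$ forces identification of $\supp(x^{(C)})$ within $|I^c|$ steps; the remaining work is only to check that $L'=L$ is admissible, to bound $\lambda_{\min}\geq 2m_{\alpha}/|C|$, and to use monotonicity of $t\mapsto t/(t+2L)$ to see that \cref{eq:sdc} implies that radius. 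You instead attempt to prove the identification theorem itself from scratch, via the induction (i) the in-face vertex is bad, (ii) the in-face direction is selected, (iii) the step is maximal, together with the maintenance claim that maximal in-face steps on bad vertices do not spoil the configuration.

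The problem is that the decisive step of that induction is precisely the one you leave unproven and yourself flag as the ``main obstacle'': the estimate $\Sc{\nabla f(x_\ell)}{d_\ell^{\FD}}\geq \frac{m_{\alpha}}{|C|}\n{d_\ell^{\FD}}^2$, and the verification that (i) and (ii) survive the drift of the gradient at the radius stated in \cref{eq:sdc} rather than at some smaller one. Your skeleton is faithful --- the rewriting of \cref{eq:sdc} as $\frac{\eta}{1-\eta}|C|L\leq m_{\alpha}$ (with $\eta=\n{x_{\bar k}-x^{(C)}}_1$) is the correct normalization, and the claim that a maximal away step does not increase the $\ell_1$ distance to $x^{(C)}$ is true and provable --- but asserting that ``the constants close'' is not a proof: controlling all bad multipliers uniformly over the whole window, while the iterate moves, is exactly the content of \cite[Theorem 3.3]{bomze2020active}, i.e.\ the entire nontrivial substance of this proposition. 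As written, your proposal establishes the setup (reduction, Lipschitz constant, multiplier gap) but not the conclusion; to complete it you must either carry out that multiplier-drift bookkeeping in full, or do what the paper does and quote the known AFW identification theorem, after which only the elementary comparison of radii remains.
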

\begin{proof}
	Since $y_k$ does not change for $k \in [\bar{k}\! :\! \bar{k} + |I^c|]$, Algorithm 2 corresponds to an application of the AFW to the simplex $\Delta_{n - 1}$ on the variable $x$. For $1 \leq i \leq n$ let $\lambda_{i}(x) = \frac{\partial}{\partial x_i} h_{\G}(x, y_{\bar{k}})$ be the multiplier functions associated to the vertices of the simplex, and let 
	\begin{equation}
		\lambda_{\min} = \min_{i \in V \sm C} - \lambda_i(x^{(C)}) \, ,
	\end{equation}
	be the smallest negative multiplier with corresponding index not in $C$. Let $L'$ be a Lipschitz constant for $\nabla_x h_{\G}(x, y)$ with respect to the variable $x$. By \cite[Theorem 3.3]{bomze2020active} if 
	\begin{equation} \label{deq:bomze}
		\n{x_{\bar{k}} - x^{(C)}}_1 < \frac{\lambda_{\min}}{\lambda_{\min} + 2L'} 
	\end{equation}
	we have the desired identification result. \\		
	We now prove that we can take $L'$ equal to $L$ in the following way:
	\begin{equation}
		\n{\nabla_x h_{\G}(x', y_{\bar{k}}) - \nabla_x h_{\G}(x, y_{\bar{k}})} = \n{2\bar{A}(x' - x) + \alpha(x' - x)} \leq (2 \lmax + \alpha) \n{x' - x} \, ,
	\end{equation}
	where we used $\nabla_x h_{\G}(x, y) = 2\bar{A}x + \alpha x$ in the equality. As for the multipliers, for $i \in V \sm C$ we have the lower bound
	\begin{equation} \label{lambdaineq}
		\begin{aligned}
			-	\lambda_i(x^{(C)}) = \Sc{\nabla_x h_{\G}(x^{(C)}, y_{\bar{k}})}{(x^{(C)} - e_i)}  = & \frac{- 2|C\cap E^{y_{\bar{k}}}(i)| + 2|C| -  2 + \alpha}{|C|} \\
			\geq & \frac{2m_{\alpha}(C, \G(y_{\bar{k}}))}{|C|} 
		\end{aligned}
	\end{equation}
	by combining  \cref{eq:x1piece} and \cref{eq:x2piece} in the second equation. We can now bound $\lambda_{\min}$ from below:
	\begin{equation} \label{deq:lm}
		\lambda_{\min} = \min_{i \in V \sm C} - \lambda_i(x^{(C)}) \geq \min_{i \in V \sm C}\frac{2|C| - 2|C\cap E^y_{\bar{k}}(i)| - 2 + \alpha}{|C|} \geq  \frac{2m_{\alpha}(C, \G(y_{\bar{k}}))}{|C|} \, ,
	\end{equation}
	where we applied \cref{lambdaineq} in the inequality. 
	Finally, we have
	\begin{equation} \label{deq:ri}
		\frac{\lambda_{\min}}{\lambda_{\min} + 2L} \leq  \frac{m_{\alpha}(C, \G(y_{\bar{k}}))}{m_{\alpha}(C, \G(y_{\bar{k}})) + 2|C|\lmax + |C|\alpha} \, 
	\end{equation}
	where we applied \cref{lambdaineq} together with \cref{deq:lm} in the inequality. The thesis follows applying \cref{deq:ri} to the RHS of \cref{deq:bomze}.
\end{proof}
\begin{rem} \label{r:dmax}
	It is a well known result that for any graph the maximal eigenvalue $\delta_{\max}$  of the adjacency matrix is less than or equal to $d_{\max}$, the maximum degree of a node (see, e.g., \cite{cvetkovic1990largest}). Then condition \cref{eq:sdc} can be replaced by
	\begin{equation}
		\n{x_{\bar{k}} - x^{(C)}}_1 \leq  \frac{m_{\alpha}(C, \G(y_{\bar{k}}))}{m_{\alpha}(C, \G(y_{\bar{k}})) + 2|C|d_{\max} + |C|\alpha} \, .
	\end{equation}
\end{rem}
\section{Numerical results}\label{Sec:NumRes}
In this section we report on a numerical comparison of the methods. We remark that, even though these methods only find maximal $s$-defective cliques, they can still be applied as a heuristic to derive lower bounds on the maximum $s$-defective clique within a global optimization scheme.	
With our tests, we aim to achieve the followings:
\begin{itemize}
	\item empirically verify the active set identification property of the proposed methods;
	\item prove that the proposed FW variant  is faster than the FDFW on these problems, while mantaining the same solution quality;
	\item make a preliminary comparison between the FW methods and the CONOPT solver used in \cite{stozhkovcontinuous}. 
\end{itemize}

In the tests, the regularization parameters were set to $\alpha = 1$ and $\beta = 2/n^2$. An intuitive motivation for this choice of $\beta$ can be given by imposing that the missing edges for an identified $s$-defective clique are always included in the support of the  FW vertex. Formally, if $x_k = x^{(C)}$ with $C$ an $s$-defective clique and $(y_k)_{ij} = 0$ with $\{i, j\} \in {C \choose 2}$ we want to ensure that the FW vertex $s_k = (x^{(s_k)}, y^{(s_k)})$ is such that $y^{(s_k)}_{ij} = 1$. Now for $\{l, m\} \notin {C \choose 2}$ and assuming $|C| < n$ (otherwise $C = V$ and the problem is trivial) we have
\begin{equation}\label{ppyh}
	\frac{\partial}{\partial y_{ij}} h_{\G}(x_k, y_k) =	\frac{2}{|C|^2} > \frac{2}{n^2} = \beta \geq  \frac{\partial}{\partial y_{lm}} h_{\G}(x_k, y_k)
\end{equation}
where the first equality and the last inequality easily follow from \cref{eq:dph}. From \cref{ppyh} it is then immediate to conclude that $\{i, j\}$ must be in the support of $y^{(s_k)}$. \\
We used the stepsize $\alpha_{k} = \bar{\alpha}_k$  with $\bar{\alpha}_k$ given by \cref{deq:alphalb} for $\cC= 2$, corresponding to an estimate of $0.5$ for the Lipschitz constant $L$ of $\nabla h_{\G}$. A gradient recycling scheme was adopted to use first order information more efficiently (see \cite{rinaldi2020avoiding} for details).  The code was written in MATLAB and the tests were performed on an Intel Core i7-10750H CPU 2.60GHz, 16GB RAM. \\
The 50 graph instances we used in the tests are taken from the Second DIMACS Implementation Challenge \cite{johnson1993cliques}. These graphs are a common benchmark to assess the performance of algorithms for maximum (defective) clique problems (see references in \cite{stozhkovcontinuous}), and the particular instances we selected coincide with the ones employed in \cite{stozhkovcontinuous} in order to ensure a fair comparison at least for the quality of the solutions. Following the rule adopted in \cite{stozhkovcontinuous}, for each triple $(\G, s, \mathcal{A})$ with $\G$ a graph from the 50 instances considered, $s \in [1\! : \! 4]$, $\mathcal{A}$ the FDFW or the FWdc, we set a global time limit of 600 seconds and employed a simple restarting scheme with up to 100 random starting points. The algorithms always completed 100 runs within the time limit, with the exception of 2 instances (see \cref{tab:2}). For both algorithms the $x$ component of the starting point was generated with MATLAB's function rand and then normalized dividing it by its sum. An analogous rule was applied to generate the $y$ component for the starting point of the FDFW, while for the FWdc the $y$ component was simply initialized to 0. For the stopping criterion, two conditions are required: the current support of the $x$ components coincides with an $s$-defective clique, and the FW gap is less than or equal to $\varepsilon:= 10^{-3}$.
In the experiments, both algorithms always terminated having identified an $s$-defective clique, thus providing an empirical verification of the results we proved in this paper.  \\
\begin{figure}[t]
	\centering
	\includegraphics[width=0.6\textwidth]{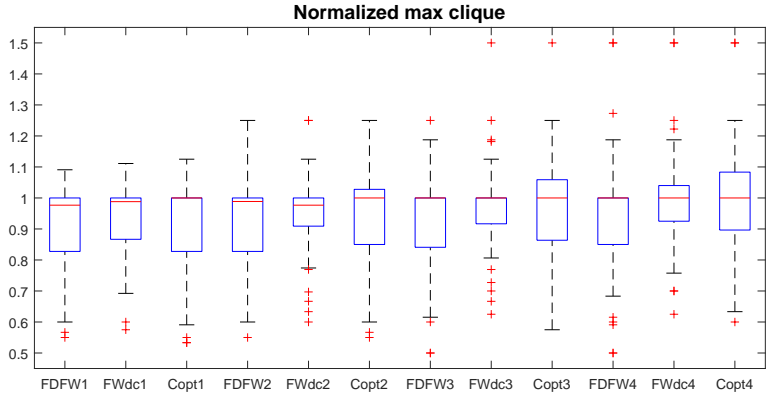}
	\caption{$\mathcal{A} i$ is the box plot of the maximum clique found within the 600 seconds/ 100 runs limit for each instance by the method $\mathcal{A}$ for $s = i$ divided by the maximum cardinality clique of the instance. }
	\label{fig:1}
\end{figure}
\begin{figure}[t]
	\centering
	\includegraphics[width=0.6\textwidth]{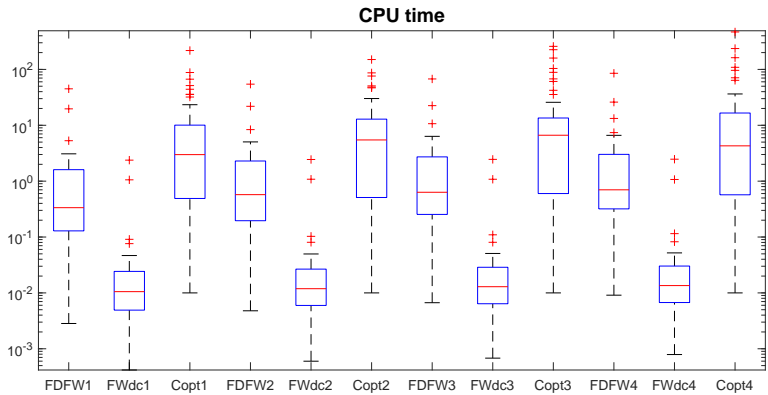}
	\caption{$\mathcal{A} i$ is the box plot of the average running time for each instance for the method $\mathcal{A}$ and $s = i$.}
	\label{fig:2}
\end{figure}

In the boxplots, each series consists of 50 values corresponding to aggregate data for the runs performed on the 50 instances. The data for the CONOPT solver are taken from \cite{stozhkovcontinuous}. The red lines represent the median of the values in each series, and the boxes extend from the 25th percentile $q_1$  of the observed data to the 75th percentile $q_3$. The whiskers cover all the other values in a range of $[q_1 - w (q_3 - q_1), q_3 + w(q_3 - q_1)]$, with the coefficient $w$ equal to $2.7$ times the standard deviation of the values. \\
In \cref{fig:1}, the bar $\mathcal{A}i$ represents the distribution of the maximum cardinality of the $s$-defective clique found by method $\mathcal{A}$ with $s = i$, divided by the maximum clique cardinality of the instance. Notice that some data points are greater than 1, as expected since for $s > 0$ the cardinality of an $s$-defective clique can exceed the maximum clique cardinality. While the variance is higher for the max cliques found by the CONOPT solver, no significant differences can be seen for the median size of max cliques. \\
In \cref{fig:2}, $\mathcal{A}i$ represents the distribution of average running times in seconds (on a logarithmic scale, explainng the asymmetry of the box plots) of method $\mathcal{A}$ for $s = i$. Here we can see that FWdc outperforms FDFW by about 1.5 orders of magnitude, which in turns outperforms the CONOPT solver by about 1 order of magnitude. This is even more impressive if we take into account the fact that the CONOPT solver is written in C++, while FW variants are written in MATLAB. \\

\section{Appendix}
\subsection{Line searches} \label{sa:ls}
Here we briefly report some relevant results about line searches proved in \cite{bomze2020active} showing a connection between well known line searches and conditions \cref{deq:alphalb}, \cref{eq:deltaflb}. \\
Recall that the exact line search stepsize is given by
\begin{equation} \label{eq:ls}
	\alpha_k \in \argmax_{\alpha \in [0, \alpha_{k}^{\max}]} f(w_k + \alpha d_k) \, .
\end{equation}
The stronger condition $\alpha_k = \max \{ \argmax_{\alpha \in [0, \alpha_{k}^{\max}]} f(w_k + \alpha d_k) \}$	is required for \cite[Theorem 4.3]{bomze2019first}, used to prove \cref{cor:algo2conv},   \\
The stepsize $\alpha_k$ given by the Armijo line search always satisfies the condition
\begin{equation} \label{Armijo}
	f(w_k + \alpha_k d_k) - f(w_k) \geq c_1 \alpha_k \Sc{\nabla f(w_k)}{d_k} \, ,
\end{equation}
for some constant $c_1 \in (0, 1)$. This stepsize is produced by considering a sequence $\{\beta_k^{(j)}\}_{j \in \mathbb{N}_0}$ of tentative stepsizes given by $\beta_k^{(0)} = \alpha_{k}^{\max}$, $\beta_{k}^{(j + 1)} = \gamma \beta_{k}^{(j)}$, with $\gamma \in (0, 1)$, and taking the largest tentative stepsize satisfying \cref{Armijo}. \\ 
We report here for completeness results from \cite{bomze2020active} proving that Armijo and exact line search satisfy conditions \cref{deq:alphalb} and \cref{eq:deltaflb}.
\begin{lem}\label{alphacond}
	Consider a sequence $\{w_k\}$ in $\Q$ such that $w_{k  + 1} = w_{k} + \alpha_k d_k$ with $\alpha_k \in \R_{\geq 0}$, $d_k\in \R^n$. Assume that $d_k$ is a proper ascent direction in $w_k$, i.e. $\Sc{\nabla f(w_k)}{d_k} > 0$. 
	\begin{itemize}
		\item[1.] If $\alpha_k$ is given by exact line search, then \cref{deq:alphalb} and \cref{eq:deltaflb} are satisfied with $\cC = \frac{1}{L}$ and $\rho = \frac{1}{2}$. 
		\item[2.] If $\alpha_k$ is given by the Armijo line search described above, then \cref{deq:alphalb}  and \cref{eq:deltaflb} are satisfied with $\cC= \frac{2\gamma(1 - c_1)}{L}$ and $\rho = c_1\min\{1,2\gamma (1- c_1)\}<1.$ 
	\end{itemize}
\end{lem}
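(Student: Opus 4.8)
The plan is to base everything on the descent lemma for $L$-smooth objectives, which for any $\alpha\ge 0$ furnishes the quadratic lower bound
\begin{equation*}
	f(w_k+\alpha d_k)\ge f(w_k)+\alpha\, g_k-\tfrac{L}{2}\alpha^2\n{d_k}^2,\qquad g_k:=\Sc{\nabla f(w_k)}{d_k}>0,
\end{equation*}
together with the one-sided estimate $\Sc{\nabla f(w_k+\alpha d_k)}{d_k}\ge g_k-L\alpha\n{d_k}^2$, obtained from Cauchy--Schwarz and the $L$-Lipschitzness of $\nabla f$. The right-hand side of the quadratic bound is maximized at $\alpha^\ast:=g_k/(L\n{d_k}^2)$, which is exactly the quantity $\cC\, g_k/\n{d_k}^2$ entering $\bar\alpha_k$ when $\cC=1/L$.

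For the exact line search (part 1) I would first establish \cref{deq:alphalb}. The one-sided estimate shows that $\alpha\mapsto f(w_k+\alpha d_k)$ has positive derivative for every $\alpha<\alpha^\ast$, hence is strictly increasing on $[0,\alpha^\ast)$. Consequently no point of $[0,\bar\alpha_k)$ can maximize $f$ over $[0,\alpha_k^{\max}]$, so the exact maximizer obeys $\alpha_k\ge\bar\alpha_k=\min(\alpha_k^{\max},\alpha^\ast)$; this single monotonicity argument disposes of both the case $\alpha^\ast\le\alpha_k^{\max}$ and the case $\alpha^\ast>\alpha_k^{\max}$ at once. For \cref{eq:deltaflb}, since $\alpha_k$ is optimal and $\bar\alpha_k\in[0,\alpha_k^{\max}]$, I would bound $f(w_{k+1})\ge f(w_k+\bar\alpha_k d_k)$ and apply the quadratic lower bound at $\bar\alpha_k$; using $\bar\alpha_k\le\alpha^\ast$ to absorb the quadratic term (so that $\tfrac{L}{2}\bar\alpha_k^2\n{d_k}^2\le\tfrac12\bar\alpha_k g_k$) gives $f(w_{k+1})-f(w_k)\ge\tfrac12\bar\alpha_k g_k$, i.e.\ \cref{eq:deltaflb} with $\rho=\tfrac12$.

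For the Armijo line search (part 2) the first step is to characterize which trial stepsizes pass the test \cref{Armijo}. Substituting the quadratic lower bound into \cref{Armijo} shows that every $\alpha\in[0,\hat\alpha]$ is accepted, where $\hat\alpha:=2(1-c_1)g_k/(L\n{d_k}^2)$; note that $\cC\, g_k/\n{d_k}^2=\gamma\hat\alpha$ for the prescribed $\cC=2\gamma(1-c_1)/L$, so that $\bar\alpha_k=\min(\alpha_k^{\max},\gamma\hat\alpha)$. To obtain \cref{deq:alphalb} I would track the backtracking sequence $\beta_k^{(j)}=\gamma^j\alpha_k^{\max}$: if $\alpha_k^{\max}\le\hat\alpha$ the first trial is accepted and $\alpha_k=\alpha_k^{\max}\ge\bar\alpha_k$; otherwise, letting $j^\ast$ be the smallest index with $\beta_k^{(j^\ast)}\le\hat\alpha$, its predecessor satisfies $\beta_k^{(j^\ast-1)}>\hat\alpha$, whence $\beta_k^{(j^\ast)}>\gamma\hat\alpha$, and since $\beta_k^{(j^\ast)}$ already passes the test the accepted stepsize obeys $\alpha_k\ge\beta_k^{(j^\ast)}>\gamma\hat\alpha\ge\bar\alpha_k$. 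Finally \cref{eq:deltaflb} is immediate: the acceptance inequality \cref{Armijo} combined with $\alpha_k\ge\bar\alpha_k$ gives $f(w_{k+1})-f(w_k)\ge c_1\alpha_k g_k\ge c_1\bar\alpha_k g_k$, so \cref{eq:deltaflb} holds with $\rho=c_1$, and a fortiori with the stated value $\rho=c_1\min\{1,2\gamma(1-c_1)\}\le c_1<1$.

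The descent-lemma estimates are routine; the one step needing genuine care is the backtracking bound in part 2, where one must argue simultaneously that acceptance cannot occur later than the first trial falling to or below $\hat\alpha$, and that this very trial is still larger than $\gamma\hat\alpha$ because its predecessor exceeded $\hat\alpha$. This two-sided squeeze is precisely what produces the factor $\gamma$ in $\cC$ and is the crux of the argument.
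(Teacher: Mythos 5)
Your proof is correct, and it takes a genuinely different route from the paper in the sense that the paper gives no argument at all here: its proof of \cref{alphacond} consists of a single sentence deferring point 1 to Lemma B.1 and point 2 to Lemma B.3 of \cite{bomze2020active}. Your reconstruction from the descent lemma $f(w_k+\alpha d_k)\ge f(w_k)+\alpha\,\Sc{\nabla f(w_k)}{d_k}-\tfrac{L}{2}\alpha^2\n{d_k}^2$ is sound in both parts: for exact line search, positivity of the directional derivative below $\alpha^*=\Sc{\nabla f(w_k)}{d_k}/(L\n{d_k}^2)$ rules out maximizers in $[0,\bar\alpha_k)$ and gives \cref{deq:alphalb}, while evaluating the quadratic bound at $\bar\alpha_k\le\alpha^*$ gives \cref{eq:deltaflb} with $\rho=\tfrac12$; for Armijo, the acceptance threshold $\hat\alpha=2(1-c_1)\Sc{\nabla f(w_k)}{d_k}/(L\n{d_k}^2)$ together with the squeeze $\gamma\hat\alpha<\beta_k^{(j^*)}\le\hat\alpha$ on the first trial at or below $\hat\alpha$ correctly produces the factor $\gamma$ in $\cC$. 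Note that your Armijo conclusion is in fact slightly sharper than the statement: you obtain \cref{eq:deltaflb} with $\rho=c_1$, and since $\bar\alpha_k\,\Sc{\nabla f(w_k)}{d_k}\ge 0$ this implies the stated weaker value $\rho=c_1\min\{1,2\gamma(1-c_1)\}$, whose extra factor is an artifact of how the cited reference packages its constants. What your version buys is a self-contained appendix, independent of the external lemmas; what the paper's citation buys is brevity and constants stated in the exact form of the reference. The one hypothesis you use silently and should state is that $L$ is a Lipschitz constant for $\nabla f$ along the segments $[w_k,w_k+\alpha_k^{\max}d_k]\subset \Q$ (automatic here, since $h_{\G}$ is a polynomial and $\p$ is compact), as both the descent lemma and the one-sided gradient estimate rest on it.
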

\begin{proof}
	Point 1 follows from \cite[Lemma B.1]{bomze2020active} and point 2 follows from \cite[Lemma B.3]{bomze2020active}.
\end{proof}
\clearpage
\subsection{Detailed numerical results} \label{s:dnr}
We report in this section detailed numerical results for each of the 50 graphs we used in our tests.
\begin{table}[tbph]
	{\footnotesize
		\caption{Clique sizes for the FDFW}
		\resizebox{\textwidth}{!}{\begin{tabular}{|l|SSS|SSS|SSS|SSS|}
				\hline
				Graph          &     & {$s=1$}  &       &     & {$s=2$}  &       &     & {$s =3$}  &       &     & {$s =4$}  &       \\ \cline{2-13} 
				& \multicolumn{1}{c}{Max} & \multicolumn{1}{c}{Mean} & \multicolumn{1}{c|}{Std} & \multicolumn{1}{c}{Max} & \multicolumn{1}{c}{Mean} & \multicolumn{1}{c|}{Std} & \multicolumn{1}{c}{Max} & \multicolumn{1}{c}{Mean} & \multicolumn{1}{c|}{Std} & \multicolumn{1}{c}{Max} & \multicolumn{1}{c}{Mean} & \multicolumn{1}{c|}{Std} \\ \hline 
				brock200\_1    & 19   & 15.6  & 1.3  & 21   & 17.0  & 1.4  & 22   & 17.9  & 1.2  & 21   & 18.6  & 1.3  \\
				brock200\_2    & 10   & 8.0   & 0.9  & 11   & 9.0   & 0.9  & 11   & 9.5   & 0.8  & 12   & 9.5   & 1.0  \\
				brock200\_3    & 13   & 10.2  & 1.1  & 14   & 11.5  & 1.0  & 14   & 12.2  & 0.9  & 14   & 12.5  & 0.9  \\
				brock200\_4    & 15   & 11.9  & 1.2  & 16   & 13.2  & 1.1  & 17   & 14.0  & 1.1  & 17   & 14.3  & 1.3  \\
				brock400\_1    & 21   & 17.5  & 1.4  & 22   & 18.8  & 1.3  & 23   & 20.0  & 1.3  & 24   & 20.9  & 1.2  \\
				brock400\_2    & 21   & 17.8  & 1.4  & 22   & 18.9  & 1.2  & 23   & 20.0  & 1.4  & 24   & 20.9  & 1.1  \\
				brock400\_3    & 21   & 17.8  & 1.5  & 22   & 19.0  & 1.3  & 23   & 20.2  & 1.2  & 24   & 20.9  & 1.2  \\
				brock400\_4    & 21   & 17.7  & 1.5  & 22   & 18.7  & 1.3  & 23   & 20.0  & 1.3  & 24   & 21.1  & 1.3  \\
				c-fat200-1     & 12   & 11.5  & 1.1  & 12   & 11.6  & 0.9  & 12   & 11.6  & 0.9  & 12   & 11.7  & 0.8  \\
				c-fat200-2     & 24   & 20.8  & 3.1  & 24   & 21.9  & 2.0  & 24   & 22.2  & 1.2  & 24   & 22.3  & 0.9  \\
				c-fat200-5     & 58   & 45.7  & 11.7 & 58   & 54.4  & 5.0  & 58   & 56.2  & 3.0  & 58   & 56.7  & 2.3  \\
				c-fat500-1     & 14   & 13.6  & 1.1  & 14   & 13.7  & 0.7  & 14   & 13.7  & 0.7  & 14   & 13.7  & 0.7  \\
				c-fat500-2     & 26   & 25.1  & 2.0  & 26   & 25.4  & 1.4  & 26   & 25.6  & 1.1  & 26   & 25.6  & 0.9  \\
				c-fat500-5     & 64   & 55.2  & 10.1 & 64   & 59.7  & 5.2  & 64   & 62.2  & 2.6  & 64   & 62.4  & 3.1  \\
				c-fat500-10    & 126  & 99.8  & 20.8 & 126  & 114.8 & 11.8 & 126  & 123.7 & 4.1  & 126  & 125.2 & 1.0  \\
				hamming6-2     & 32   & 22.6  & 4.6  & 32   & 23.3  & 4.8  & 32   & 16.2  & 5.5  & 32   & 13.7  & 5.6  \\
				hamming6-4     & 4    & 3.7   & 0.5  & 5    & 3.7   & 0.5  & 4    & 3.2   & 0.7  & 4    & 2.7   & 0.9  \\
				hamming8-2     & 121  & 83.3  & 14.5 & 122  & 86.1  & 15.1 & 123  & 87.1  & 15.3 & 123  & 88.1  & 15.2 \\
				hamming8-4     & 14   & 10.5  & 1.2  & 15   & 11.9  & 1.1  & 15   & 12.4  & 1.0  & 15   & 12.7  & 1.0  \\
				hamming10-2 \tablefootnote{The time limit was reached in 29 runs}      & 454  & 309.6 & 42.1 & 456  & 313.9 & 43.2 & 468  & 315.5 & 44.4 & 468  & 317.5 & 45.3 \\
				hamming10-4    & 31   & 27.8  & 1.3  & 32   & 29.1  & 1.3  & 33   & 30.3  & 1.2  & 34   & 31.4  & 1.3  \\
				johnson8-2-4   & 4    & 2.4   & 1.1  & 4    & 2.2   & 1.2  & 4    & 2.1   & 1.2  & 4    & 2.0   & 1.2  \\
				johnson8-4-4   & 14   & 9.5   & 1.6  & 14   & 9.9   & 1.6  & 14   & 9.9   & 1.8  & 14   & 7.6   & 2.3  \\
				johnson16-2-4  & 8    & 7.8   & 0.4  & 9    & 8.5   & 0.6  & 9    & 8.4   & 0.6  & 9    & 8.3   & 0.6  \\
				johnson32-2-4  & 16   & 14.9  & 0.7  & 17   & 15.9  & 0.8  & 17   & 16.4  & 0.7  & 18   & 17.3  & 0.8  \\
				keller4        & 11   & 8.3   & 0.9  & 12   & 9.5   & 0.7  & 12   & 10.0  & 1.0  & 13   & 9.9   & 0.9  \\
				keller5        & 20   & 17.2  & 1.1  & 20   & 18.3  & 1.0  & 21   & 19.4  & 0.9  & 23   & 20.4  & 1.0  \\
				MANN\_a9       & 16   & 14.6  & 1.0  & 17   & 11.1  & 2.9  & 17   & 9.4   & 3.1  & 17   & 8.0   & 2.9  \\
				MANN\_a27      & 118  & 117.6 & 0.5  & 119  & 118.6 & 0.6  & 120  & 119.5 & 0.7  & 121  & 120.4 & 0.7  \\
				MANN\_a45 \tablefootnote{\label{nrun2} The time limit was reached in 13 runs}     & 331  & 330.5 & 0.5  & 332  & 331.5 & 0.7  & 333  & 332.4 & 0.8  & 334  & 333.3 & 0.6  \\
				p\_hat300-1    & 8    & 6.4   & 0.8  & 9    & 7.3   & 0.7  & 9    & 7.7   & 0.7  & 9    & 7.9   & 0.8  \\
				p\_hat300-2    & 25   & 20.8  & 1.6  & 24   & 21.6  & 1.4  & 26   & 22.3  & 1.6  & 27   & 22.6  & 1.8  \\
				p\_hat300-3    & 33   & 28.9  & 1.7  & 34   & 30.1  & 1.9  & 36   & 30.9  & 1.9  & 36   & 32.0  & 1.9  \\
				p\_hat500-1    & 9    & 7.2   & 0.9  & 10   & 8.1   & 0.8  & 10   & 8.7   & 0.8  & 11   & 9.2   & 0.8  \\
				p\_hat500-2    & 35   & 29.7  & 2.1  & 35   & 30.6  & 2.1  & 37   & 31.5  & 2.2  & 37   & 32.1  & 2.2  \\
				p\_hat500-3    & 46   & 41.9  & 2.1  & 48   & 42.7  & 2.5  & 49   & 43.6  & 2.4  & 51   & 44.8  & 2.5  \\
				p\_hat700-1    & 9    & 7.4   & 0.8  & 11   & 8.4   & 0.8  & 12   & 8.9   & 0.8  & 11   & 9.6   & 0.7  \\
				p\_hat700-2    & 41   & 36.5  & 2.1  & 43   & 37.2  & 2.3  & 44   & 38.2  & 2.4  & 44   & 39.1  & 2.1  \\
				p\_hat700-3    & 57   & 52.1  & 2.5  & 60   & 53.0  & 2.5  & 61   & 54.0  & 2.3  & 61   & 55.3  & 2.7  \\
				san200\_0.7\_1 & 17   & 15.6  & 0.5  & 18   & 16.8  & 0.4  & 18   & 17.2  & 0.7  & 19   & 17.3  & 0.8  \\
				san200\_0.7\_2 & 13   & 12.9  & 0.4  & 14   & 14.0  & 0.2  & 15   & 14.5  & 0.6  & 16   & 15.0  & 0.6  \\
				san200\_0.9\_1 & 46   & 45.4  & 0.5  & 47   & 46.6  & 0.6  & 48   & 47.6  & 0.7  & 49   & 48.5  & 0.7  \\
				san200\_0.9\_2 & 39   & 35.3  & 2.2  & 43   & 36.4  & 2.4  & 41   & 37.2  & 2.4  & 42   & 38.0  & 2.6  \\
				san200\_0.9\_3 & 32   & 28.2  & 2.0  & 33   & 28.8  & 2.2  & 35   & 29.7  & 2.6  & 35   & 30.0  & 2.3  \\
				san400\_0.5\_1 & 8    & 6.5   & 0.9  & 9    & 9.0   & 0.0  & 10   & 9.9   & 0.3  & 11   & 10.5  & 0.6  \\
				san400\_0.7\_1 & 22   & 20.6  & 0.7  & 22   & 21.9  & 0.3  & 23   & 23.0  & 0.2  & 24   & 23.9  & 0.2  \\
				san400\_0.7\_2 & 16   & 15.3  & 0.7  & 17   & 17.0  & 0.2  & 18   & 18.0  & 0.0  & 19   & 18.8  & 0.4  \\
				san400\_0.7\_3 & 13   & 12.1  & 1.0  & 14   & 13.9  & 0.4  & 15   & 14.9  & 0.2  & 16   & 15.6  & 0.5  \\
				sanr200\_0.7   & 16   & 13.1  & 1.2  & 17   & 14.4  & 1.1  & 18   & 15.5  & 1.1  & 18   & 15.9  & 1.2  \\
				sanr200\_0.9   & 37   & 32.4  & 2.0  & 39   & 33.7  & 2.1  & 40   & 34.7  & 2.2  & 40   & 35.7  & 2.0 \\ \hline
		\end{tabular}}
	}
\end{table}

\begin{table}[h]
	
	\centering
	\caption{Running times for the FDFW}
	\resizebox{0.8\textwidth}{!}{\begin{tabular}{|l|SS|SS|SS|SS|}
			\hline
			Graph          &     \multicolumn{2}{c|}{$s =1$}  &        \multicolumn{2}{c|}{$s =2$}       &     \multicolumn{2}{c|}{$s =3$}   &     \multicolumn{2}{c|}{$s =4$}         \\ \cline{2-9} 
			& \multicolumn{1}{c}{Time} & \multicolumn{1}{c|}{Std} & \multicolumn{1}{c}{Time} & \multicolumn{1}{c|}{Std} & \multicolumn{1}{c}{Time} & \multicolumn{1}{c|}{Std} & \multicolumn{1}{c}{Time} & \multicolumn{1}{c|}{Std} \\ \hline 
			brock200\_1    & 0.053  & 0.0013 & 0.179  & 0.0271  & 0.227  & 0.0468 & 0.261  & 0.0341  \\
			brock200\_2    & 0.167  & 0.0297 & 0.218  & 0.0336  & 0.263  & 0.0727 & 0.327  & 0.1844  \\
			brock200\_3    & 0.156  & 0.0216 & 0.196  & 0.0350  & 0.254  & 0.0697 & 0.318  & 0.1616  \\
			brock200\_4    & 0.151  & 0.0178 & 0.189  & 0.0321  & 0.231  & 0.0483 & 0.278  & 0.0777  \\
			brock400\_1    & 0.677  & 0.0989 & 0.968  & 0.0246  & 1.250  & 0.1431 & 1.495  & 0.0679  \\
			brock400\_2    & 0.591  & 0.0197 & 0.983  & 0.0939  & 1.258  & 0.1185 & 1.493  & 0.0812  \\
			brock400\_3    & 0.595  & 0.0201 & 0.972  & 0.0316  & 1.242  & 0.0518 & 1.493  & 0.1350  \\
			brock400\_4    & 0.597  & 0.0190 & 0.961  & 0.0352  & 1.258  & 0.1030 & 1.519  & 0.1454  \\
			c-fat200-1     & 0.181  & 0.0057 & 0.318  & 0.0080  & 0.348  & 0.0082 & 0.376  & 0.0119  \\
			c-fat200-2     & 0.180  & 0.0079 & 0.314  & 0.0123  & 0.345  & 0.0169 & 0.379  & 0.0208  \\
			c-fat200-5     & 0.148  & 0.0071 & 0.264  & 0.0174  & 0.317  & 0.0247 & 0.356  & 0.0251  \\
			c-fat500-1     & 1.488  & 0.0249 & 2.441  & 0.0446  & 2.728  & 0.0458 & 3.016  & 0.0515  \\
			c-fat500-2     & 1.519  & 0.0257 & 2.579  & 0.1139  & 2.891  & 0.1459 & 3.256  & 0.1961  \\
			c-fat500-5     & 1.600  & 0.0641 & 2.408  & 0.1294  & 2.868  & 0.2600 & 3.311  & 0.4613  \\
			c-fat500-10    & 1.664  & 0.1425 & 2.436  & 0.3427  & 2.719  & 0.5139 & 3.073  & 0.5811  \\
			hamming6-2     & 0.008  & 0.0011 & 0.015  & 0.0036  & 0.020  & 0.0037 & 0.026  & 0.0107  \\
			hamming6-4     & 0.013  & 0.0005 & 0.024  & 0.0009  & 0.025  & 0.0018 & 0.027  & 0.0013  \\
			hamming8-2     & 0.172  & 0.0350 & 0.267  & 0.0515  & 0.346  & 0.0604 & 0.428  & 0.0645  \\
			hamming8-4     & 0.172  & 0.0296 & 0.312  & 0.0283  & 0.368  & 0.0487 & 0.409  & 0.0330  \\
			hamming10-2  & 19.686 & 8.3243 & 21.857 & 10.8597 & 22.468 & 9.8968 & 25.991 & 11.2051 \\
			hamming10-4    & 5.295  & 0.4853 & 8.365  & 0.5668  & 10.680 & 0.7274 & 13.257 & 0.7791  \\
			johnson8-2-4   & 0.003  & 0.0005 & 0.005  & 0.0005  & 0.007  & 0.0030 & 0.009  & 0.0049  \\
			johnson8-4-4   & 0.011  & 0.0009 & 0.020  & 0.0044  & 0.024  & 0.0033 & 0.030  & 0.0067  \\
			johnson16-2-4  & 0.037  & 0.0048 & 0.065  & 0.0048  & 0.080  & 0.0030 & 0.097  & 0.0041  \\
			johnson32-2-4  & 1.235  & 0.3833 & 1.547  & 0.2246  & 1.900  & 0.1825 & 2.325  & 0.2487  \\
			keller4        & 0.081  & 0.0163 & 0.164  & 0.0733  & 0.209  & 0.1159 & 0.253  & 0.1667  \\
			keller5        & 2.354  & 0.3020 & 3.819  & 0.6645  & 5.140  & 1.3110 & 6.438  & 2.0281  \\
			MANN\_a9       & 0.005  & 0.0013 & 0.012  & 0.0081  & 0.018  & 0.0132 & 0.027  & 0.0260  \\
			MANN\_a27      & 1.757  & 0.3220 & 2.504  & 0.5624  & 3.362  & 0.7169 & 4.371  & 0.8807  \\
			MANN\_a45  & 44.945 & 5.8122 & 54.466 & 8.6046  & 67.499 & 9.7882 & 85.228 & 11.6795 \\
			p\_hat300-1    & 0.348  & 0.0133 & 0.600  & 0.0408  & 0.634  & 0.0592 & 0.683  & 0.0746  \\
			p\_hat300-2    & 0.323  & 0.0105 & 0.548  & 0.0177  & 0.631  & 0.0246 & 0.717  & 0.0754  \\
			p\_hat300-3    & 0.262  & 0.0182 & 0.401  & 0.0197  & 0.466  & 0.0237 & 0.534  & 0.0332  \\
			p\_hat500-1    & 1.209  & 0.0944 & 1.996  & 0.2460  & 2.293  & 0.8532 & 2.449  & 0.5990  \\
			p\_hat500-2    & 1.353  & 0.0713 & 2.295  & 0.0999  & 2.836  & 0.1464 & 3.261  & 0.1831  \\
			p\_hat500-3    & 1.369  & 0.1112 & 2.161  & 0.1798  & 2.535  & 0.1734 & 2.803  & 0.1708  \\
			p\_hat700-1    & 2.549  & 0.1642 & 4.080  & 0.3388  & 4.468  & 0.8064 & 4.943  & 1.0512  \\
			p\_hat700-2    & 2.765  & 0.1417 & 5.040  & 0.2496  & 6.340  & 0.2623 & 7.396  & 0.3281  \\
			p\_hat700-3    & 3.084  & 0.2889 & 4.929  & 0.5101  & 5.871  & 0.5302 & 6.607  & 0.5871  \\
			san200\_0.7\_1 & 0.159  & 0.0454 & 0.277  & 0.0429  & 0.358  & 0.0659 & 0.481  & 0.0947  \\
			san200\_0.7\_2 & 0.271  & 0.0732 & 0.402  & 0.1320  & 0.404  & 0.1214 & 0.452  & 0.0791  \\
			san200\_0.9\_1 & 0.128  & 0.0218 & 0.220  & 0.0302  & 0.296  & 0.0282 & 0.373  & 0.0332  \\
			san200\_0.9\_2 & 0.129  & 0.0171 & 0.207  & 0.0225  & 0.260  & 0.0278 & 0.319  & 0.0295  \\
			san200\_0.9\_3 & 0.105  & 0.0134 & 0.183  & 0.0186  & 0.237  & 0.0392 & 0.319  & 0.0786  \\
			san400\_0.5\_1 & 1.605  & 0.1003 & 2.178  & 0.5510  & 2.011  & 0.6413 & 2.860  & 0.8604  \\
			san400\_0.7\_1 & 1.706  & 0.6290 & 2.266  & 0.6993  & 2.469  & 0.1244 & 2.956  & 0.1424  \\
			san400\_0.7\_2 & 1.684  & 0.5364 & 2.069  & 0.7020  & 2.133  & 0.3488 & 2.497  & 0.1083  \\
			san400\_0.7\_3 & 1.713  & 0.3159 & 2.131  & 0.7214  & 2.020  & 0.6281 & 2.244  & 0.4235  \\
			sanr200\_0.7   & 0.103  & 0.0045 & 0.186  & 0.0248  & 0.227  & 0.0476 & 0.297  & 0.0961  \\
			sanr200\_0.9   & 0.108  & 0.0110 & 0.173  & 0.0175  & 0.220  & 0.0178 & 0.283  & 0.0223 \\ \hline
	\end{tabular}}
	
	\label{tab:2}
\end{table}

\clearpage

\begin{table}[h]
	\caption{Clique sizes for the FWdc}
	\resizebox{\textwidth}{!}{\begin{tabular}{|l|SSS|SSS|SSS|SSS|}
			\hline
			Graph          &     & {$s =1$}  &       &     & {$s =2$}  &       &     & {$s =3$}  &       &     & {$s =4$}  &       \\ \cline{2-13} 
			& \multicolumn{1}{c}{Max} & \multicolumn{1}{c}{Mean} & \multicolumn{1}{c|}{Std} & \multicolumn{1}{c}{Max} & \multicolumn{1}{c}{Mean} & \multicolumn{1}{c|}{Std} & \multicolumn{1}{c}{Max} & \multicolumn{1}{c}{Mean} & \multicolumn{1}{c|}{Std} & \multicolumn{1}{c}{Max} & \multicolumn{1}{c}{Mean} & \multicolumn{1}{c|}{Std} \\ \hline 
			brock200\_1    & 21   & 18.2  & 1.01  & 21   & 18.2  & 0.90  & 21   & 18.5  & 1.02  & 22  & 18.7  & 1.05  \\
			brock200\_2    & 10   & 8.6   & 0.91  & 11   & 8.9   & 0.81  & 11   & 9.3   & 0.91  & 12  & 9.5   & 0.86  \\
			brock200\_3    & 13   & 11.4  & 0.87  & 14   & 11.5  & 0.88  & 15   & 11.9  & 1.02  & 14  & 12.0  & 0.97  \\
			brock200\_4    & 16   & 13.3  & 1.01  & 16   & 13.7  & 0.93  & 16   & 14.0  & 1.01  & 17  & 14.1  & 1.14  \\
			brock400\_1    & 24   & 20.7  & 1.10  & 25   & 21.4  & 1.30  & 24   & 21.3  & 1.25  & 25  & 21.7  & 1.33  \\
			brock400\_2    & 24   & 20.9  & 1.12  & 25   & 21.3  & 1.26  & 26   & 21.4  & 1.32  & 25  & 21.8  & 1.16  \\
			brock400\_3    & 24   & 20.7  & 0.97  & 24   & 21.0  & 1.18  & 25   & 21.2  & 1.09  & 24  & 21.4  & 1.06  \\
			brock400\_4    & 23   & 20.6  & 1.16  & 23   & 21.1  & 1.07  & 24   & 21.5  & 1.07  & 25  & 21.6  & 1.38  \\
			c-fat200-1     & 12   & 11.4  & 1.69  & 12   & 10.8  & 2.64  & 12   & 8.7   & 4.31  & 12  & 7.9   & 4.49  \\
			c-fat200-2     & 24   & 21.2  & 4.76  & 24   & 20.3  & 6.25  & 24   & 18.7  & 7.63  & 24  & 17.2  & 8.45  \\
			c-fat200-5     & 58   & 55.1  & 7.08  & 58   & 53.3  & 9.88  & 58   & 52.0  & 12.88 & 58  & 53.3  & 10.66 \\
			c-fat500-1     & 14   & 13.5  & 1.13  & 14   & 12.7  & 2.69  & 14   & 10.8  & 4.57  & 14  & 9.5   & 5.24  \\
			c-fat500-2     & 26   & 25.5  & 2.47  & 26   & 24.5  & 5.23  & 26   & 22.8  & 7.65  & 26  & 21.6  & 8.73  \\
			c-fat500-5     & 64   & 60.8  & 10.85 & 64   & 61.7  & 8.62  & 64   & 58.8  & 14.73 & 64  & 56.2  & 18.38 \\
			c-fat500-10    & 126  & 122.6 & 12.02 & 126  & 119.8 & 17.20 & 126  & 118.0 & 24.57 & 126 & 115.7 & 29.14 \\
			hamming6-2     & 32   & 28.6  & 4.58  & 32   & 27.9  & 4.63  & 32   & 27.5  & 4.40  & 32  & 27.4  & 4.09  \\
			hamming6-4     & 4    & 3.7   & 0.46  & 5    & 4.2   & 0.61  & 6    & 4.4   & 0.66  & 6   & 4.8   & 0.63  \\
			hamming8-2     & 128  & 121.1 & 9.18  & 128  & 120.2 & 9.07  & 128  & 118.8 & 10.29 & 128 & 116.9 & 11.86 \\
			hamming8-4     & 16   & 12.7  & 2.68  & 16   & 12.6  & 2.31  & 16   & 12.5  & 2.24  & 17  & 12.8  & 2.28  \\
			hamming10-2    & 512  & 498.9 & 14.52 & 512  & 497.0 & 15.59 & 512  & 495.5 & 17.16 & 512 & 493.9 & 18.06 \\
			hamming10-4    & 36   & 31.6  & 2.93  & 36   & 32.2  & 2.67  & 37   & 32.1  & 2.80  & 37  & 32.9  & 2.32  \\
			johnson8-2-4   & 4    & 4.0   & 0.00  & 5    & 4.9   & 0.27  & 5    & 5.0   & 0.10  & 6   & 5.3   & 0.47  \\
			johnson8-4-4   & 14   & 11.9  & 1.73  & 14   & 11.7  & 1.46  & 14   & 11.8  & 1.35  & 15  & 11.9  & 1.18  \\
			johnson16-2-4  & 8    & 8.0   & 0.00  & 9    & 9.0   & 0.00  & 9    & 9.0   & 0.00  & 10  & 9.8   & 0.39  \\
			johnson32-2-4  & 16   & 16.0  & 0.00  & 17   & 17.0  & 0.00  & 17   & 17.0  & 0.00  & 18  & 17.8  & 0.41  \\
			keller4        & 12   & 9.3   & 1.15  & 12   & 9.7   & 0.81  & 13   & 10.1  & 0.80  & 13  & 10.6  & 0.82  \\
			keller5        & 27   & 20.7  & 1.77  & 26   & 21.1  & 1.72  & 26   & 21.5  & 1.50  & 27  & 21.5  & 1.55  \\
			MANN\_a9       & 17   & 16.3  & 0.67  & 18   & 16.8  & 0.71  & 19   & 17.4  & 0.72  & 19  & 17.6  & 0.82  \\
			MANN\_a27      & 120  & 118.2 & 0.43  & 120  & 119.2 & 0.37  & 121  & 120.1 & 0.43  & 122 & 121.1 & 0.38  \\
			MANN\_a45      & 332  & 331.0 & 0.17  & 333  & 332.0 & 0.17  & 334  & 333.0 & 0.20  & 335 & 334.0 & 0.22  \\
			p\_hat300-1    & 8    & 6.9   & 0.68  & 9    & 7.1   & 0.79  & 9    & 7.4   & 0.75  & 9   & 7.5   & 0.85  \\
			p\_hat300-2    & 26   & 21.9  & 1.25  & 25   & 22.0  & 1.20  & 25   & 22.2  & 1.18  & 26  & 22.2  & 1.27  \\
			p\_hat300-3    & 35   & 31.4  & 1.35  & 34   & 31.8  & 1.32  & 35   & 31.8  & 1.30  & 36  & 32.2  & 1.19  \\
			p\_hat500-1    & 10   & 7.9   & 0.81  & 10   & 8.1   & 0.83  & 10   & 8.1   & 0.90  & 11  & 8.4   & 0.96  \\
			p\_hat500-2    & 35   & 31.6  & 1.84  & 35   & 31.9  & 1.73  & 36   & 31.8  & 1.79  & 35  & 31.8  & 1.57  \\
			p\_hat500-3    & 48   & 44.8  & 1.58  & 49   & 44.8  & 1.76  & 49   & 45.1  & 1.73  & 49  & 45.4  & 1.67  \\
			p\_hat700-1    & 9    & 8.0   & 0.70  & 10   & 8.2   & 0.78  & 10   & 8.4   & 0.78  & 10  & 8.6   & 0.77  \\
			p\_hat700-2    & 44   & 39.9  & 1.77  & 43   & 40.0  & 1.77  & 44   & 40.2  & 1.81  & 44  & 40.0  & 2.00  \\
			p\_hat700-3    & 62   & 57.0  & 1.94  & 60   & 57.6  & 1.73  & 61   & 57.5  & 1.85  & 62  & 57.9  & 1.94  \\
			san200\_0.7\_1 & 18   & 16.6  & 0.84  & 19   & 17.1  & 1.23  & 20   & 18.0  & 1.17  & 21  & 18.7  & 1.44  \\
			san200\_0.7\_2 & 15   & 13.1  & 0.26  & 15   & 14.0  & 0.14  & 16   & 14.7  & 0.49  & 16  & 15.2  & 0.42  \\
			san200\_0.9\_1 & 65   & 48.9  & 4.23  & 65   & 49.4  & 3.94  & 68   & 50.1  & 4.11  & 70  & 50.6  & 3.85  \\
			san200\_0.9\_2 & 52   & 39.5  & 2.74  & 52   & 39.9  & 2.53  & 55   & 40.8  & 3.32  & 55  & 41.5  & 3.35  \\
			san200\_0.9\_3 & 36   & 33.4  & 1.24  & 36   & 33.7  & 1.30  & 38   & 34.1  & 1.43  & 39  & 34.4  & 1.51  \\
			san400\_0.5\_1 & 9    & 8.1   & 0.30  & 10   & 9.0   & 0.32  & 10   & 9.6   & 0.50  & 12  & 10.1  & 0.57  \\
			san400\_0.7\_1 & 23   & 21.8  & 0.77  & 24   & 22.5  & 0.86  & 25   & 22.9  & 1.30  & 25  & 23.5  & 1.73  \\
			san400\_0.7\_2 & 23   & 17.4  & 1.16  & 20   & 17.9  & 0.78  & 21   & 18.5  & 0.93  & 21  & 19.0  & 0.75  \\
			san400\_0.7\_3 & 17   & 15.1  & 0.93  & 18   & 15.6  & 0.85  & 18   & 16.2  & 0.92  & 19  & 16.6  & 0.88  \\
			sanr200\_0.7   & 17   & 14.9  & 0.86  & 18   & 15.2  & 1.09  & 17   & 15.6  & 0.89  & 19  & 15.8  & 0.99  \\
			sanr200\_0.9   & 41   & 37.5  & 1.79  & 41   & 37.5  & 1.67  & 42   & 38.1  & 1.75  & 43  & 38.3  & 1.73 \\ \hline
	\end{tabular}}
	
\end{table}	
\pagebreak
\begin{table}[h]
	\caption{Running times for the FWdc}	
	\resizebox{\textwidth}{!}{\begin{tabular}{|l|SS|SS|SS|SS|}
			\hline
			Graph          &     \multicolumn{2}{c|}{$s =1$}  &        \multicolumn{2}{c|}{$s =2$}       &     \multicolumn{2}{c|}{$s =3$}   &     \multicolumn{2}{c|}{$s =4$}         \\ \cline{2-9} 
			& \multicolumn{1}{c}{Time} & \multicolumn{1}{c|}{Std} & \multicolumn{1}{c}{Time} & \multicolumn{1}{c|}{Std} & \multicolumn{1}{c}{Time} & \multicolumn{1}{c|}{Std} & \multicolumn{1}{c}{Time} & \multicolumn{1}{c|}{Std} \\ \hline 
			brock200\_1    & 0.0060 & 0.00560 & 0.0060 & 0.00064 & 0.0064 & 0.00070 & 0.0069 & 0.00083 \\
			brock200\_2    & 0.0045 & 0.00053 & 0.0051 & 0.00041 & 0.0051 & 0.00050 & 0.0053 & 0.00042 \\
			brock200\_3    & 0.0044 & 0.00045 & 0.0050 & 0.00056 & 0.0052 & 0.00047 & 0.0052 & 0.00038 \\
			brock200\_4    & 0.0045 & 0.00039 & 0.0053 & 0.00055 & 0.0055 & 0.00054 & 0.0057 & 0.00052 \\
			brock400\_1    & 0.0122 & 0.00088 & 0.0142 & 0.00157 & 0.0141 & 0.00082 & 0.0144 & 0.00080 \\
			brock400\_2    & 0.0129 & 0.00066 & 0.0133 & 0.00118 & 0.0144 & 0.00095 & 0.0147 & 0.00072 \\
			brock400\_3    & 0.0129 & 0.00060 & 0.0138 & 0.00098 & 0.0139 & 0.00057 & 0.0146 & 0.00096 \\
			brock400\_4    & 0.0130 & 0.00065 & 0.0136 & 0.00100 & 0.0142 & 0.00069 & 0.0146 & 0.00067 \\
			c-fat200-1     & 0.0065 & 0.00061 & 0.0070 & 0.00080 & 0.0068 & 0.00052 & 0.0071 & 0.00074 \\
			c-fat200-2     & 0.0060 & 0.00100 & 0.0067 & 0.00109 & 0.0067 & 0.00098 & 0.0067 & 0.00091 \\
			c-fat200-5     & 0.0071 & 0.00216 & 0.0081 & 0.00237 & 0.0084 & 0.00261 & 0.0089 & 0.00259 \\
			c-fat500-1     & 0.0351 & 0.00254 & 0.0370 & 0.00188 & 0.0366 & 0.00210 & 0.0364 & 0.00243 \\
			c-fat500-2     & 0.0311 & 0.00360 & 0.0331 & 0.00374 & 0.0334 & 0.00355 & 0.0334 & 0.00314 \\
			c-fat500-5     & 0.0296 & 0.00621 & 0.0330 & 0.00730 & 0.0343 & 0.00599 & 0.0347 & 0.00592 \\
			c-fat500-10    & 0.0390 & 0.01353 & 0.0400 & 0.01617 & 0.0422 & 0.01685 & 0.0433 & 0.01748 \\
			hamming6-2     & 0.0014 & 0.00012 & 0.0021 & 0.00017 & 0.0023 & 0.00027 & 0.0026 & 0.00029 \\
			hamming6-4     & 0.0009 & 0.00015 & 0.0011 & 0.00014 & 0.0012 & 0.00014 & 0.0013 & 0.00015 \\
			hamming8-2     & 0.0204 & 0.00162 & 0.0245 & 0.00195 & 0.0287 & 0.00272 & 0.0302 & 0.00308 \\
			hamming8-4     & 0.0060 & 0.00055 & 0.0065 & 0.00059 & 0.0068 & 0.00075 & 0.0069 & 0.00075 \\
			hamming10-2    & 1.0562 & 0.06427 & 1.0825 & 0.06712 & 1.0816 & 0.07577 & 1.0717 & 0.07835 \\
			hamming10-4    & 0.0762 & 0.00268 & 0.0805 & 0.00400 & 0.0805 & 0.00379 & 0.0823 & 0.00342 \\
			johnson8-2-4   & 0.0004 & 0.00006 & 0.0006 & 0.00009 & 0.0007 & 0.00013 & 0.0008 & 0.00013 \\
			johnson8-4-4   & 0.0011 & 0.00009 & 0.0015 & 0.00017 & 0.0018 & 0.00021 & 0.0020 & 0.00027 \\
			johnson16-2-4  & 0.0023 & 0.00028 & 0.0029 & 0.00036 & 0.0034 & 0.00041 & 0.0037 & 0.00055 \\
			johnson32-2-4  & 0.0218 & 0.00218 & 0.0240 & 0.00229 & 0.0248 & 0.00232 & 0.0259 & 0.00282 \\
			keller4        & 0.0034 & 0.00034 & 0.0044 & 0.00053 & 0.0044 & 0.00065 & 0.0051 & 0.00062 \\
			keller5        & 0.0428 & 0.00207 & 0.0446 & 0.00206 & 0.0455 & 0.00239 & 0.0460 & 0.00236 \\
			MANN\_a9       & 0.0010 & 0.00013 & 0.0014 & 0.00017 & 0.0016 & 0.00024 & 0.0018 & 0.00031 \\
			MANN\_a27      & 0.0910 & 0.00610 & 0.1029 & 0.00723 & 0.1096 & 0.00723 & 0.1150 & 0.00699 \\
			MANN\_a45      & 2.3774 & 0.07529 & 2.4419 & 0.10145 & 2.4544 & 0.08325 & 2.4712 & 0.09195 \\
			p\_hat300-1    & 0.0071 & 0.00029 & 0.0078 & 0.00068 & 0.0080 & 0.00046 & 0.0083 & 0.00040 \\
			p\_hat300-2    & 0.0085 & 0.00031 & 0.0096 & 0.00061 & 0.0103 & 0.00080 & 0.0104 & 0.00062 \\
			p\_hat300-3    & 0.0095 & 0.00046 & 0.0110 & 0.00070 & 0.0119 & 0.00072 & 0.0126 & 0.00092 \\
			p\_hat500-1    & 0.0163 & 0.00041 & 0.0169 & 0.00039 & 0.0170 & 0.00043 & 0.0171 & 0.00057 \\
			p\_hat500-2    & 0.0208 & 0.00129 & 0.0224 & 0.00133 & 0.0228 & 0.00133 & 0.0235 & 0.00131 \\
			p\_hat500-3    & 0.0243 & 0.00085 & 0.0266 & 0.00100 & 0.0278 & 0.00125 & 0.0281 & 0.00100 \\
			p\_hat700-1    & 0.0298 & 0.00070 & 0.0307 & 0.00091 & 0.0309 & 0.00109 & 0.0311 & 0.00079 \\
			p\_hat700-2    & 0.0394 & 0.00108 & 0.0421 & 0.00211 & 0.0427 & 0.00143 & 0.0431 & 0.00161 \\
			p\_hat700-3    & 0.0466 & 0.00151 & 0.0498 & 0.00181 & 0.0508 & 0.00177 & 0.0520 & 0.00144 \\
			san200\_0.7\_1 & 0.0049 & 0.00059 & 0.0056 & 0.00071 & 0.0061 & 0.00066 & 0.0064 & 0.00068 \\
			san200\_0.7\_2 & 0.0048 & 0.00068 & 0.0063 & 0.00093 & 0.0073 & 0.00134 & 0.0081 & 0.00148 \\
			san200\_0.9\_1 & 0.0086 & 0.00076 & 0.0108 & 0.00104 & 0.0118 & 0.00107 & 0.0129 & 0.00149 \\
			san200\_0.9\_2 & 0.0075 & 0.00053 & 0.0092 & 0.00075 & 0.0101 & 0.00079 & 0.0113 & 0.00073 \\
			san200\_0.9\_3 & 0.0070 & 0.00058 & 0.0084 & 0.00067 & 0.0092 & 0.00069 & 0.0098 & 0.00084 \\
			san400\_0.5\_1 & 0.0116 & 0.00056 & 0.0128 & 0.00094 & 0.0141 & 0.00172 & 0.0142 & 0.00136 \\
			san400\_0.7\_1 & 0.0130 & 0.00056 & 0.0143 & 0.00077 & 0.0152 & 0.00101 & 0.0158 & 0.00110 \\
			san400\_0.7\_2 & 0.0128 & 0.00064 & 0.0139 & 0.00068 & 0.0147 & 0.00082 & 0.0157 & 0.00138 \\
			san400\_0.7\_3 & 0.0125 & 0.00043 & 0.0136 & 0.00059 & 0.0143 & 0.00072 & 0.0148 & 0.00084 \\
			sanr200\_0.7   & 0.0049 & 0.00061 & 0.0057 & 0.00071 & 0.0058 & 0.00064 & 0.0062 & 0.00078 \\
			sanr200\_0.9   & 0.0070 & 0.00053 & 0.0087 & 0.00072 & 0.0095 & 0.00078 & 0.0119 & 0.00090 \\ \hline
	\end{tabular}}
\end{table}	

\clearpage

\bibliographystyle{plain}	
\bibliography{cubic}
\end{document}